\newcommand{\Q}{{\mathbb{Q}}}
\newcommand{\fm}{{\mathfrak m}}
\newcommand{\fn}{{\mathfrak n}}
\newcommand{\fp}{{\mathfrak p}}
\newcommand{\fq}{{\mathfrak q}}
\newcommand{\fd}{{\mathfrak d}}
\newcommand{\fe}{{\mathfrak e}}
\newcommand{\frr}{{\mathfrak r}}
\newcommand{\ft}{{\mathfrak t}}
\newcommand{\cA}{{\mathcal A}}
\newcommand{\cD}{{\mathcal D}}
\newcommand{\cO}{{\mathscr O}}
\newcommand{\ep}{{\varepsilon}}
\newcommand{\rN}{{\mathrm {N}}}
\newtheorem{thm}{Theorem}[section]
\newtheorem{cor}[thm]{Corollary}
\newtheorem{lem}[thm]{Lemma}
\theoremstyle{definition}
\newtheorem{defn}[thm]{Definition}
\newtheorem{theorem}{Theorem}
\numberwithin{equation}{section}
\begin{document}

\title[A Generalization on the Barban-Vehov Problem]{A Generalization of Graham's Estimate \\ on the Barban-Vehov Problem}

\author[Chen An]{Chen An}
\address{Duke University, 120 Science Drive, Durham NC 27708}
\email{chen.an@duke.edu, chen.an.nku@gmail.com}

\maketitle

\begin{abstract}
Suppose $\{ \lambda_d\}$ are Selberg's sieve weights and $1 \le w < y \le x$. Graham's estimate on the Barban-Vehov problem shows that $\sum_{1 \le n \le x} (\sum_{d|n} \lambda_d)^2 = \frac{x}{\log(y/w)} + O(\frac{x}{\log^2(y/w)})$.
We prove an analogue of this estimate for a sum over ideals of an arbitrary number field $k$.
Our asymptotic estimate remains the same; the only difference is that the effective error term may depend on arithmetics of $k$. Our innovation involves multiple counting results on ideals instead of integers. Notably, some of the results are nontrivial generalizations. Furthermore, we prove a corollary that leads to a new zero density estimate.
\end{abstract}




\section{Introduction}

Fix $1 \le w < y$. For any positive integer $d$, define
\[ m(d):=\begin{cases} 1 & d \le w, \\ \frac{\log(y/d)}{\log(y/w)} & w < d \le y, \\ 0 & d>y, \end{cases} \]
and define $\lambda_d := \mu(d)m(d)$, for the M\"{o}bius function $\mu(d)$. The set $\{\lambda_d \}_{d \ge 1}$ is the set of Selberg sieve weights.
Barban and Vehov \cite{BV68} proposed a problem on estimating the sum $\sum_{1 \le n \le x} (\sum_{d|n} \lambda_d)^2$ for any $x \ge y$. They proved an upper bound
$ \ll \frac{x}{\log (y/w)}.$
This bound was made asymptotic by Graham \cite{Gr78}, who showed
\begin{equation}\label{graham}
 \sum_{1 \le n \le x} \left(\sum_{d|n} \lambda_d \right)^2 = \frac{x}{\log (y/w)} + O\left(\frac{x}{\log^2 (y/w)} \right). 
 \end{equation}
 Equation (\ref{graham}) has applications to zero density estimates and primes in arithmetic progressions, including Linnik's constant. Further generalizations on the Barban-Vehov problem have been made by, e.g., Jutila \cite{Ju79}, and Murty \cite{Mur19}. Such improvements either regard generalizing the summand or imposing a congruence condition on the sum over $n$. 

The purpose of this paper is to generalize the asymptotic estimate (\ref{graham}) to an arbitrary base field $k$ rather than the base field $\Q$, i.e., the sum is taken over ideals in $\cO_k$ instead of positive integers. In particular, we will define the Selberg sieve weights for ideals and prove the following Theorem \ref{igtoug1}. Our aim to generalize the Barban-Vehov problem is three-fold. First, Theorem \ref{igtoug1} leads to a new zero density estimate via Corollary \ref{ug}. Second, it is a natural but nontrivial generalization from a sum of integers to a sum of ideals. During the proof of Theorem \ref{igtoug1}, we need to introduce multiple lemmas on estimating arithmetic functions related to ideals. Third, our Theorem \ref{igtoug1} and Corollary \ref{ug} below are expected to have wide applications to methods involving the large sieve, over an arbitrary number field.


For any ideal $\fn$ of $\cO_k$, define 
\[ \mu_k(\fn):=\begin{cases} (-1)^s & \text{ if } \fn \text{ is a product of } s \text{ distinct prime ideals,} \\ 0 & \text{ otherwise.} \end{cases} \] Define 
\[ \lambda_\fn := \mu_k(\fn)m(\rN\fn). \]
Denote $\fd | \fn$ if $\fn \subseteq \fd$ and denote $n_k=[k:\Q],d_k=\text{disc}(k/\Q)$. Denote $\beta_0$ to be the possible (real, simple) Siegel zero for $k$ (see Definition \ref{Siegel} for its definition). In particular, $\frac12 < \beta_0 < 1$.

Our main theorem is the following.



\begin{thm}\label{igtoug1}
Let $k$ be a number field of degree $n_k$ and discriminant $d_k$, and let $1 \le w < y \le x$. 
Then we have
\[ \sum_{\substack{\fn \subseteq \cO_k, \rN\fn \le x}} \left( \sum_{\fd | \fn} \lambda_\fd \right)^2 = \frac{x}{\log(y/w)}+O_{n_k,d_k}(\frac{C_{\beta_0}x}{\log^2(y/w)}). \]
where we may take $C_{\beta_0}=\frac{1}{(1-\beta_0)^6D_{\beta_0}^2}$ with $D_{\beta_0}=\min \{1,|\zeta_k^{'}(\beta_0)| \}$ if $\beta_0$ exists and $C_{\beta_0}=1$ otherwise.
\end{thm}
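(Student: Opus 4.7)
The plan is to generalize Graham's strategy, replacing integer divisor sums by ideal ones and the Riemann zeta function by the Dedekind zeta function $\zeta_k$. First, I would expand the square and exchange the order of summation to rewrite the left-hand side as
\[
\sum_{\substack{\fd_1,\fd_2 \\ \rN\fd_i \le y}} \lambda_{\fd_1}\lambda_{\fd_2}\,\#\{\fn\subseteq\cO_k:\rN\fn\le x,\,[\fd_1,\fd_2]\mid\fn\}.
\]
Writing $\fn=[\fd_1,\fd_2]\fm$ reduces the inner cardinality to counting ideals of norm at most $x/\rN[\fd_1,\fd_2]$, for which an effective Landau-type estimate $\#\{\fm:\rN\fm\le X\}=\rho_k X+O_{n_k,d_k}(X^{1-1/n_k})$ applies, with $\rho_k$ the residue of $\zeta_k$ at $s=1$. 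I would absorb the error contribution into the final bound by exploiting the support $\rN\fd_i\le y\le x$.

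The main term becomes $\rho_k x\sum_{\fd_1,\fd_2}\lambda_{\fd_1}\lambda_{\fd_2}/\rN[\fd_1,\fd_2]$. Applying the ideal analog of the identity $1/\rN[\fd_1,\fd_2]=(\rN\fd_1\,\rN\fd_2)^{-1}\sum_{\fe\mid(\fd_1,\fd_2)}\phi_k(\fe)$ rearranges this as $\rho_k x\sum_\fe\phi_k(\fe)T_\fe^2$, where $T_\fe:=\sum_{\fe\mid\fd}\mu_k(\fd)m(\rN\fd)/\rN\fd$. To evaluate $T_\fe$, I would apply Mellin inversion with the transform $M(s)=(y^s-w^s)/(s^2\log(y/w))$ of $m$ to write
\[
T_\fe=\frac{\mu_k(\fe)}{2\pi i\,\log(y/w)}\int_{(c)}\frac{y^s-w^s}{s^2}\cdot\frac{F_\fe(s+1)}{\rN\fe^{s+1}\,\zeta_k(s+1)}\,ds,\qquad F_\fe(u):=\prod_{\fp\mid\fe}(1-\rN\fp^{-u})^{-1}.
\]
The apparent pole of the integrand at $s=0$ is removed by the zero of $1/\zeta_k(1+s)$ there. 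Shifting the contour past $s=0$ and past $s=\beta_0-1$ (if a Siegel zero exists), the factor $1/\rho_k$ arising from $1/\zeta_k(1+s)\sim s/\rho_k$ cancels the outer $\rho_k$, and then summing $\phi_k(\fe)T_\fe^2$ over $\fe$---whose weighted partial sums grow logarithmically in $\rN\fe$---produces exactly $x/\log(y/w)$ for the main term.

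The main obstacle is the contour shift in the presence of a Siegel zero. The residue at $s=\beta_0-1$ contributes a term to $T_\fe$ whose size is governed by $D_{\beta_0}=\min\{1,|\zeta_k'(\beta_0)|\}$ and by negative powers of $1-\beta_0$; after squaring $T_\fe$ and summing against $\phi_k(\fe)$, these combine to yield the explicit factor $C_{\beta_0}=1/((1-\beta_0)^6 D_{\beta_0}^2)$ in the error. Making the implicit constants uniform in $n_k$ and $d_k$ further requires explicit bounds for $|1/\zeta_k(\sigma+it)|$ on the shifted contour, an effective zero-free region for $\zeta_k$, and several lemmas on ideal-theoretic arithmetic functions---this bookkeeping is the most delicate part of the argument.
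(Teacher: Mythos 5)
Your main-term machinery (Selberg diagonalization $1/\rN[\fd_1,\fd_2]=\rN(\fd_1,\fd_2)/(\rN\fd_1\rN\fd_2)$, then $\sum_\fe\varphi(\fe)T_\fe^2$ with $T_\fe$ evaluated by Mellin inversion and a contour shift past $s=0$ and $s=\beta_0-1$) is a legitimate alternative to the paper's real-variable route, which instead proves the needed partial sums of $\mu_k$ against logarithmic weights as separate lemmas (Lemmas \ref{678ing}--\ref{12ing}, themselves proved by exactly the kind of contour shift you describe) and feeds them into the expanded sum. So the analytic core is comparable. The fatal problem is your very first step. The error in the ideal count is $O\bigl((x/\rN[\fd_1,\fd_2])^{1-1/n_k}\bigr)$ per pair, and summing this over all pairs with $\rN\fd_i\le y$ gives, by the paper's own H\"older computation (\ref{firsterror}) adapted to your setting, roughly $x^{1-1/n_k}y^{2/n_k}$. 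For $n_k=2$ and $y$ close to $x$ this is of size $x^{3/2}$ (restricting to coprime pairs with $\rN\fd_i\asymp y^{0.9}$, where $m(\rN\fd_i)\gg1$, already gives a contribution $\gg x^{1/2}y^{0.9}$), which vastly exceeds the target $O(x/\log^2(y/w))$. "Exploiting the support $\rN\fd_i\le y\le x$" cannot absorb this; the expansion-and-count argument only works when, roughly, the product of the two ranges of divisor norms is at most $x$.

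This is precisely the central difficulty of the Barban--Vehov problem, and it is why the paper (following Graham) first polarizes to the bilinear form $\sum_\fn(\sum_{\fd|\fn}\Lambda_1(\fd))(\sum_{\fe|\fn}\Lambda_2(\fe))$ and then splits into the cases $wy\le x$ (where the direct expansion is valid) and $wy>x$ (where it is not). In the second case one must use the complementary-divisor identity (\ref{smalltrick}), $\sum_{\fd|\fn}\mu_k(\fd)\log(z/\rN\fd)=\Lambda(\fn)$ for $\fn\neq(1)$, to replace the sum over divisors of norm up to $w$ (resp.\ $y$) by $\Lambda(\fn)$ plus a sum over divisors of norm less than $\rN\fn/w$ (resp.\ $\rN\fn/y$); only then are the divisor ranges short enough ($\le x/w$ and $\le x/y$, with product $\le x\cdot\frac{x}{wy}<x$) for the counting step to be affordable. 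Your proposal contains no substitute for this manoeuvre, so as written it proves the theorem only in a restricted range such as $y^2\le x$. A secondary, lesser issue: your final assertion that the sum over $\fe$ "produces exactly $x/\log(y/w)$" elides the second-order analysis needed to certify an error of $O(x/\log^2(y/w))$ rather than merely $o(x/\log(y/w))$, which is the whole content of Graham-type asymptotics; in the paper this precision comes from the explicit $O(1)$ in Lemma \ref{muandphi} and the explicit secondary terms in Lemmas \ref{678ing}--\ref{11ing}.
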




Theorem \ref{igtoug1} appears to be the first theorem that provides an asymptotic estimate to the Barban-Vehov problem for a base field other than $\Q$. When the base field is $\Q$, Graham's estimate (\ref{graham}) is currently the best bound,  and the quality of our asymptotic estimate matches the one given by Graham. For the proof of Theorem \ref{igtoug1}, we follow the outline of Graham's approach while proving a series of new estimates with modified methods. These methods allow us to overcome obstacles when we work over an arbitrary base field. A few highlights of these methods are in the proofs of Lemmas \ref{muandphi}, \ref{sqfrc}, and \ref{678ing}. Our notation differs from Graham since our notation naturally derives from an application of zero density estimates in our work on the large sieve in \cite{An20}. However, there is a correspondence: our $x,w,y$ correspond to Graham's $N,z_1,z_2$.

Theorem \ref{igtoug1} leads to the following Corollary \ref{ug}, which is a crucial ingredient to prove a new zero density estimate. The approach from Corollary \ref{ug} to the zero density estimate is proved in the companion paper \cite{An20}.

\begin{cor}\label{ug}
Let $k$ be a number field of degree $n_k$ and discriminant $d_k$, and let $1 \le w < y \le x$. 
For any $\alpha$ with $1/2<\alpha<1$, 
\begin{equation*}
\sum_{\substack{\fn \subseteq \cO_k, \rN\fn \le x}}  \left( \sum_{\fd | \fn} \lambda_\fd \right)^2\rN\fn^{1-2\alpha} \ll_{n_k,d_k} C_{\beta_0} \frac{\log(x/w)}{\log(y/w)}x^{2-2\alpha},
\end{equation*}
where $C_{\beta_0}$ is as in Theorem \ref{igtoug1}.
\end{cor}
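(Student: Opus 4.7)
The plan is to deduce the corollary from Theorem~\ref{igtoug1} by Abel (partial) summation against the monotone weight $t \mapsto t^{1-2\alpha}$. Set $a(\fn) := (\sum_{\fd \mid \fn} \lambda_\fd)^2 \ge 0$ and $S(t) := \sum_{\rN\fn \le t} a(\fn)$; partial summation yields
\[
\sum_{\rN\fn \le x} a(\fn) \rN\fn^{1-2\alpha} = x^{1-2\alpha} S(x) + (2\alpha - 1) \int_1^x t^{-2\alpha} S(t)\,dt.
\]
Theorem~\ref{igtoug1} gives $S(t) \ll_{n_k, d_k} C_{\beta_0} t/\log(y/w)$ for $t \ge y$; for $1 \le t \le w$ the sieve weights collapse to $\lambda_\fd = \mu_k(\fd)$, so $\sum_{\fd \mid \fn} \lambda_\fd = [\fn = \cO_k]$, forcing $S(t) = 1$; and for $w \le t \le y$ the same bound $S(t) \ll C_{\beta_0} t/\log(y/w)$ is the Barban--Vehov upper-bound content of the Selberg-sieve computation behind Theorem~\ref{igtoug1}.

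Splitting the integral at $t = w$ and assembling the three pieces: the boundary term contributes $x^{1-2\alpha} S(x) \ll C_{\beta_0} x^{2-2\alpha}/\log(y/w)$; the $[1, w]$ portion gives $(2\alpha - 1) \int_1^w t^{-2\alpha}\,dt = 1 - w^{1-2\alpha} = O(1)$; and the remaining integral is bounded by $\tfrac{C_{\beta_0}(2\alpha-1)}{\log(y/w)} \int_w^x t^{1-2\alpha}\,dt$. The crux that produces the $\log(x/w)$ factor advertised in the statement is the elementary estimate
\[
\int_w^x t^{1-2\alpha}\,dt = \frac{x^{2-2\alpha} - w^{2-2\alpha}}{2-2\alpha} = x^{2-2\alpha} \cdot \frac{1 - e^{-(2-2\alpha)\log(x/w)}}{2-2\alpha} \le x^{2-2\alpha}\log(x/w),
\]
which follows from $1 - e^{-u} \le u$ for $u \ge 0$. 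Combined with $2\alpha - 1 \le 1$, all three contributions collect into the target bound $\ll_{n_k, d_k} C_{\beta_0} x^{2-2\alpha}\log(x/w)/\log(y/w)$.

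The main obstacle is securing the uniform Barban--Vehov upper bound $S(t) \ll C_{\beta_0} t/\log(y/w)$ throughout $w \le t \le y$, since Theorem~\ref{igtoug1} as stated only directly furnishes the asymptotic for $t \ge y$. I expect this extension to be essentially immediate from the Selberg-sieve machinery already developed in the proof of Theorem~\ref{igtoug1}, namely the intermediate estimates on arithmetic sums over $\cO_k$; alternatively, one could work on $[y, x]$ alone and handle $\sum_{\rN\fn \le y} a(\fn)\rN\fn^{1-2\alpha}$ by the cruder bound $\rN\fn^{1-2\alpha} \le 1$ together with $S(y) \ll C_{\beta_0} y/\log(y/w)$, checking that the resulting $y$ gets absorbed into $x^{2-2\alpha}\log(x/w)$ via the same exponential inequality above.
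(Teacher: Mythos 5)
Your skeleton---partial summation against $t^{1-2\alpha}$, the observation that $S(t)=1$ for $t\le w$, and the key inequality $\frac{x^{2-2\alpha}-w^{2-2\alpha}}{2-2\alpha}\le x^{2-2\alpha}\log(x/w)$---is exactly the route the paper takes. But the step you flag as the ``main obstacle'' is a genuine gap, and neither of the two ways you propose to close it works. Theorem \ref{igtoug1} is stated and proved only for $x\ge y$, and you cannot invoke it with $y$ replaced by $t$ when $w\le t<y$, because the weights $\lambda_\fd$ are pinned to the parameters $(w,y)$ rather than rescaling to $(w,t)$. Concretely, for $1<\rN\fn\le t\le y$ every divisor has norm below $y$, so by (\ref{smalltrick}) the $\Lambda_2$-sum collapses and $\Delta(\fn)\log(y/w)=\Lambda(\fn)-\sum_{\fd\mid\fn}\Lambda_1(\fd)$; the crude Cauchy--Schwarz estimate from this identity gives only $S(t)\ll t\log y/\log^2(y/w)$, which overshoots the bound you need by the factor $\log y/\log(y/w)$. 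To actually get $S(t)\ll C_{\beta_0}t/\log(y/w)$ on $w\le t\le y$ one must exploit cancellation in the cross term, i.e.\ prove $\sum_{1<\rN\fn\le t}\Lambda(\fn)\sum_{\fd\mid\fn}\Lambda_1(\fd)=t\log w+O(C_{\beta_0}t)$---in effect a version of Theorem \ref{ig} in the excluded range $x<y$. This is provable by the $S_1$, $S_2$ computations of Section \ref{wygx}, but it is an additional argument, not a citation of the stated theorem.

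Your fallback does not rescue this: bounding $\sum_{\rN\fn\le y}\Delta(\fn)^2\rN\fn^{1-2\alpha}\le S(y)\ll C_{\beta_0}y/\log(y/w)$ requires $y\ll x^{2-2\alpha}\log(x/w)$ to be absorbed, which fails already for $y=x$ and $\alpha$ near $1$. So the uniform estimate on $[w,y]$ is genuinely needed and genuinely missing. (For what it is worth, the paper's own write-up substitutes the asymptotic of Theorem \ref{igtoug1} for $\sum_{\rN\fn\le t}\Delta(\fn)^2$ at every $t\in[1,x]$ inside the integral without comment, so you have put your finger on precisely the delicate point; but ``essentially immediate'' oversells it, and a complete proof must supply the $w\le t\le y$ estimate.)
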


When the base field is $\Q$, this recovers Lemma 9 in \cite{Gr81}. 
We reserve the proof of Corollary \ref{ug} to the end of the paper, in Section \ref{pflem}. 



Now, we outline the proof of Theorem \ref{igtoug1}. It is a consequence of the following Theorem \ref{ig}; we deduce Theorem \ref{igtoug1} from Theorem \ref{ig} in Section \ref{pflem}.

Let $w<y$ be as in Theorem \ref{igtoug1}. For ideals $\fd,\fe$ of $\cO_k$, define
\begin{equation}\label{lambda12}
 \Lambda_1(\fd) = \begin{cases} \mu_k(\fd) \log(\frac{w}{\rN\fd}) & \text{ if } \rN\fd \le w, \\ 0 & \text{ if } \rN\fd > w, \end{cases} \ \ \ \ \ \  \ \ \ \ \ \ \ \Lambda_2(\fe) = \begin{cases} \mu_k(\fe) \log(\frac{y}{\rN\fe}) & \text{ if } \rN\fe \le y, \\ 0 & \text{ if } \rN\fe > y. \end{cases} 
 \end{equation}

\begin{thm}\label{ig}
Let $k$ be a number field of degree $n_k$ and discriminant $d_k$, and let $1 \le w < y \le x$. We have
\[ \sum_{\rN\fn \le x} \left( \sum_{\fd | \fn} \Lambda_1(\fd) \right) \left( \sum_{\fe | \fn} \Lambda_2(\fe) \right) = x\log w + O_{n_k,d_k}\left(C_{\beta_0}x\right), \]
where $C_{\beta_0}$ is as in Theorem \ref{igtoug1}.
\end{thm}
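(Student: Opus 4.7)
The plan is to interchange the order of summation and reduce the problem to evaluating arithmetic sums over pairs of ideals. Opening
\[ L := \sum_{\rN\fn \le x}\left(\sum_{\fd|\fn}\Lambda_1(\fd)\right)\left(\sum_{\fe|\fn}\Lambda_2(\fe)\right) = \sum_{\fd,\fe}\Lambda_1(\fd)\Lambda_2(\fe)\,\#\{\fn : \rN\fn \le x,\, [\fd,\fe]\mid \fn\} \]
and applying a Landau-type ideal-counting formula $\#\{\fn : \rN\fn \le X\} = \rho_k X + O_{n_k,d_k}(X^{1-1/n_k})$, where $\rho_k = \mathrm{Res}_{s=1}\zeta_k(s)$, splits $L$ into a main term $\rho_k x\, S + E$, where
\[ S = \sum_{\fd,\fe}\frac{\Lambda_1(\fd)\Lambda_2(\fe)}{\rN[\fd,\fe]} \]
and $E$ bundles the accumulated error. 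It then suffices to show $\rho_k S = \log w + O(C_{\beta_0})$ and $E = O_{n_k,d_k}(C_{\beta_0}x)$.

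For the evaluation of $S$, I would use $\rN[\fd,\fe] = \rN\fd\,\rN\fe/\rN(\fd,\fe)$ together with the ideal-theoretic identity $\rN(\fd,\fe) = \sum_{\fa\mid(\fd,\fe)}\phi_k(\fa)$ to write $S = \sum_\fa \phi_k(\fa)M_1(\fa)M_2(\fa)$, where $M_i(\fa) = \sum_{\fa\mid\fd}\Lambda_i(\fd)/\rN\fd$ is a truncated Mertens-type sum over squarefree ideals. Each $M_i(\fa)$ admits a Mellin representation whose integrand involves $1/\zeta_k(1+u)$; assembling the two representations, the double integrand for $S$ has Euler product factoring as $\zeta_k(1+u_1+u_2)\,\tilde H(u_1,u_2)$, where $\tilde H$ is an absolutely convergent product satisfying $\tilde H(0,0) = 1$. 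Careful residue bookkeeping as both contours are shifted past the origin, and across the diagonal pole at $u_1 + u_2 = 0$, extracts the leading contribution $\log w/\rho_k$, while the remaining pieces are bounded by shifting the contour as far left as the zero-free region of $\zeta_k$ permits.

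For the error $E$, one expands $\#\{\fn : \rN\fn \le x,\, [\fd,\fe]\mid\fn\}$ with its Landau remainder and bounds the resulting double sum using the ideal-theoretic sieve lemmas highlighted in the introduction (Lemmas \ref{muandphi}, \ref{sqfrc}, \ref{678ing}), which control sums like $\sum_{\fd,\fe}|\Lambda_1(\fd)||\Lambda_2(\fe)|/\rN[\fd,\fe]^{1-1/n_k}$. The main obstacle is the quantitative control of the contour integrals for $S$: if $\zeta_k$ admits a Siegel zero $\beta_0$, the contour cannot be pushed past it, and near $s = \beta_0$ one has $1/\zeta_k(s) \asymp 1/(|\zeta_k'(\beta_0)|\,|s-\beta_0|)$. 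Propagating this through all residue calculations and combining with standard zero-repulsion factors $(1-\beta_0)^{-k}$ produces the constant $C_{\beta_0} = 1/((1-\beta_0)^6 D_{\beta_0}^2)$ of the statement; tracking all of these dependencies uniformly in $n_k, d_k$ is the most delicate bookkeeping step of the argument.
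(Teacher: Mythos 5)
Your opening moves match the paper's for the regime $wy\le x$: interchange summation, apply the Landau-type count, reduce the main term to $S=\sum_{\fd,\fe}\Lambda_1(\fd)\Lambda_2(\fe)/\rN[\fd,\fe]$ via $\rN[\fd,\fe]=\rN\fd\,\rN\fe/\rN(\fd,\fe)$ and $\rN(\fd,\fe)=\sum_{\fa\mid(\fd,\fe)}\varphi(\fa)$. (The paper then evaluates the resulting $\frr$-sum elementarily through Lemma \ref{10ing} rather than by a two-variable Mellin integral, but that is a presentational difference; your contour route for $S$ is plausible and would encounter the same Siegel-zero bookkeeping.)

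The genuine gap is in your treatment of $E$. The accumulated Landau remainder is
\[
E \ll x^{1-\frac{1}{n_k}}\sum_{\rN\fd\le w,\;\rN\fe\le y}\frac{|\Lambda_1(\fd)||\Lambda_2(\fe)|}{\rN[\fd,\fe]^{1-\frac{1}{n_k}}},
\]
and the double sum here is genuinely of size $(wy)^{1/n_k}$ (this is exactly the computation in (\ref{firsterror}), whose main term $s_k^2n_k^4(wy)^{1/n_k}\sum_\frr\cdots$ does not cancel). Hence $E\ll x^{1-1/n_k}(wy)^{1/n_k}$, which is $O(x)$ precisely when $wy\le x$ and is $\gg x$ otherwise (e.g.\ $w=x^{0.9}$, $y=2w$ gives $E\asymp x^{1+0.8/n_k}$). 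No choice of the sieve lemmas you cite can rescue this, because the loss is structural: there are too many pairs $(\fd,\fe)$ relative to $x$. This is why the paper (following Graham) splits into the cases $wy\le x$ and $wy>x$, and in the latter case abandons the divisor-opening entirely in favor of the identity (\ref{smalltrick}), writing $\sum_{\fd\mid\fn}\Lambda_1(\fd)=\Lambda(\fn)+\sum_{\fd\mid\fn,\,\rN\fd<\rN\fn/w}\mu_k(\fn/\fd)\log(\rN\fn/(\rN\fd w))$ so that the complementary divisors run over the short range $\rN\fd\le x/w$; the product then expands into $\log w\log y+S_1+S_2+S_3$ with a substantial separate analysis (prime ideal theorem for $S_1,S_2$, and Lemmas \ref{0108}, \ref{11ing}, \ref{12ing} for $S_3$). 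Your proposal omits this case entirely, so as written it proves the theorem only under the additional hypothesis $wy\le x$.
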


If $n_k=1$, Theorem \ref{ig} recovers Theorem in \cite[p.84]{Gr78}. So we suppose $n_k \ge 2$ throughout the paper. 
To compute the left-hand side in Theorem \ref{ig}, we follow Graham \cite{Gr78} to consider the cases $wy \le x$ and $wy>x$ separately. In the former case, it is straightforward to obtain our estimate. In the latter case, a small arithmetic trick is needed before our computation (see (\ref{smalltrick})), generalizing a trick developed in Graham's work. The rest of the computation relies heavily on the series of new estimates on arithmetics related to ideals. Among these estimates, the generalized M\"{o}bius function $\mu_k(\fn)$ plays a vital role. Its oscillating nature makes our method possible; see the statements from Lemma \ref{678ing} to Lemma \ref{12ing}.
Our argument's error terms may have dependencies on $n_k,d_k$, which originate in our estimates. In our work below, we omit the notation on these dependencies for simplicity.

The outline of our paper is as follows. In Section \ref{nots}, we provide a list of notations. In Section \ref{lems}, we provide all necessary lemmas in the paper. In Section \ref{wylex}, we prove Theorem \ref{ig} in the case $wy \le x$. In Section \ref{wygx}, we prove Theorem \ref{ig} in the case $wy > x$. In Section \ref{pflem}, we deduce Theorem \ref{igtoug1} from Theorem \ref{ig}, deduce Corollary \ref{ug} from Theorem \ref{igtoug1}, and prove all the lemmas.

\section{Notations}\label{nots}

In this section, we provide a list of notations and functions supported on ideals in $\cO_k$. These functions are generalizations of standard arithmetic functions. All the Gothic letters are ideals and all the ideals $\fp$ are prime ideals.

\begin{itemize}
\item
squarefree ideals = ideals $\fn$ of the form $\fn = \fp_1\fp_2\dots\fp_s$, where $\fp_1,\dots,\fp_s$ are distinct prime ideals
\item
$\omega(\fn)=$ number of distinct prime ideal divisors of $\fn$
\item
$(\fd,\fe)=$ the smallest ideal that contains both $\fd$ and $\fe$; $\fd,\fe$ are relatively prime means $(\fd,\fe)=(1)$
\item
$[\fd,\fe]=$ the largest ideal that is contained in both $\fd$ and $\fe$
\item
Euler totient function for ideals $\varphi(\fn)=\rN\fn \prod_{\fp | \fn} (1-\frac{1}{\rN\fp})$
\item
divisor function for ideals $d(\fn)=\sum_{\fd | \fn} 1=$ the number of divisors of $\fn$
\item
Liouville function for ideals $\lambda(\fn)=(-1)^{k_1+\dots+k_s}$ if $\fn=\fp_1^{k_1} \dots \fp_s^{k_s}$, where $\fp_1,\dots,\fp_s$ are distinct prime ideals
\item
von Mangoldt function for ideals
\[  \Lambda(\fn) =  \begin{cases} \log (\rN\fp), & \text{ if } \fn = \fp^m \text{ for some positive integer }m \\ 0, & \text{ otherwise} \end{cases}  \]
\item
$\kappa(\fn) = \rN\fn \prod_{\fp | \fn} (1+\frac{1}{\rN\fp})$
\item
$\sigma_{a}(\fn) = \sum_{\fd | \fn} \rN\fd^{a}$
\end{itemize}

\section{Statements of All Lemmas}\label{lems}

In this section, we state all the technical lemmas used later in our proof. All of the lemmas are counting results on arithmetic objects related to ideals. The lemmas up to Lemma \ref{0108} are non-oscillating results, i.e., summing over nonnegative values, whereas the lemmas from Lemma \ref{678ing} onward are oscillating results, since the sum involves $\mu_k$. 

Let $Q \ge 1$ be a real number. In all the results below, the number $Q$ always serves as the bound of ideal norms. We first provide a theorem that is a weaker version of \cite[Theorem 3]{LTT22}, and all the other lemmas elaborate on statements of this type.

\begin{theorem}\label{counting}\cite[Theorem 3 (weak version)]{LTT22}
There exists an absolute constant $s_k>0$ such that
\[ \sum_{\rN\fn \le Q} 1 = s_kQ+O(Q^{1-\frac{1}{n_k}}), \]
where
\[ s_k=\frac{2^{r_1}(2\pi)^{r_2}hR}{w\sqrt{|d_k|}}, \] $r_1$ is the number of real embeddings of $k$ within a fixed choice of $\overline{\Q}$, $r_2$ is the number of pairs of complex embeddings of $k$ within $\overline{\Q}$, $h$ is the class number of $k$, $R$ is the regulator, $w$ is the number of roots of unity, and the constant associated to the error term is effective. 
\end{theorem}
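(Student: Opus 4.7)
The plan is to follow Landau's classical strategy, decomposing the sum by ideal class and reducing to a lattice-point count in a homogeneously expanding region.

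First, I would split the sum using the ideal class group:
\[ \sum_{\rN\fn \le Q} 1 = \sum_{C \in \Cl(k)} \#\{\fn \in C : \rN\fn \le Q\}. \]
For each class $C$, fix an integral representative $\fa$ of $C^{-1}$. Then the ideals $\fn \in C$ with $\rN\fn \le Q$ correspond, up to unit multiplication, to nonzero $\alpha \in \fa$ with $|\rN_{k/\Q}(\alpha)| \le Q\,\rN\fa$, via the relation $\fn\fa = (\alpha)$ together with $\rN\fn = |\rN_{k/\Q}(\alpha)|/\rN\fa$.

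Second, I would embed $k$ into $k_\R := k \otimes_\Q \R \cong \R^{r_1} \times \C^{r_2}$, sending $\fa$ to a full lattice $\Lambda_\fa$ of covolume $2^{-r_2}\sqrt{|d_k|}\,\rN\fa$. Using Dirichlet's unit theorem, I would construct a fundamental domain $\cF \subset k_\R^\times$ for the multiplicative action of $\cO_k^\times$ modulo roots of unity, by lifting a fundamental parallelepiped for the rank $r_1+r_2-1$ log-unit lattice (whose covolume is $R$). Counting then reduces to
\[ \#\{\fn \in C : \rN\fn \le Q\} = \frac{1}{w}\,\#\bigl(\Lambda_\fa \cap \cF(Q)\bigr), \]
where $\cF(Q) := \{v \in \cF : |\rN(v)| \le Q\,\rN\fa\}$ is a bounded region scaling homogeneously as $\cF(Q) = (Q\,\rN\fa)^{1/n_k}\cF(1)$.

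Third, I would apply a Lipschitz-type lattice-point estimate: provided $\cF(1)$ has a piecewise Lipschitz boundary,
\[ \#\bigl(\Lambda_\fa \cap \cF(Q)\bigr) = \frac{\mathrm{vol}(\cF(Q))}{\mathrm{covol}(\Lambda_\fa)} + O\bigl(Q^{1-1/n_k}\bigr). \]
Computing $\mathrm{vol}(\cF(1))$ via the log embedding and collecting constants (the $2^{r_1}(2\pi)^{r_2}$ from Haar measure on $k_\R$, the regulator $R$, the discriminant factor from $\mathrm{covol}(\Lambda_\fa)$, the division by $w$, and the summation over the $h$ ideal classes) yields the claimed main term $s_k Q$. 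The error $O(Q^{1-1/n_k})$ survives the finite sum over classes.

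The main obstacle is the construction of $\cF$ with a sufficiently regular boundary so that the Lipschitz lattice-point estimate applies uniformly in $Q$ with an effective implied constant depending only on $k$. This is the delicate step in Landau's argument, requiring careful bookkeeping of the log embedding and the behavior near the boundary of the fundamental domain, but it is entirely classical; the refinements in \cite{LTT22} sharpen both the boundary analysis and the dependence on the field invariants.
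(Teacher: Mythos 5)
Your outline is correct: this is the classical Landau--Weber ideal-counting argument (split by ideal class, pass to a lattice-point count in a cone-shaped fundamental domain for the unit action cut out by the norm condition, apply a Lipschitz boundary estimate), and it does yield the stated asymptotic with main term $s_kQ = \mathrm{Res}_{s=1}\zeta_k(s)\,Q$ and an effective error term $O(Q^{1-\frac{1}{n_k}})$. The paper gives no proof of this theorem --- it is quoted directly as a weakened form of \cite[Theorem 3]{LTT22} --- so there is nothing internal to compare against; your classical route suffices for the weak version stated here, while the cited reference refines the boundary analysis to obtain a sharper error exponent that this paper does not need.
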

Using the class number formula, we have $s_k=\mathrm{Res}_{s=1} \zeta_k(s)$.
Our error term in Theorem \ref{counting} is weaker than \cite[Theorem 3]{LTT22} but is sufficient for our purpose.

Now we are ready to state our lemmas. The proofs are reserved to Section \ref{pflem}.

\begin{lem}\label{power}
For any constant $0<a \le 1$, we have
\begin{equation*}
\sum_{\substack{\rN\fn \le Q}} \rN\fn^{-a}=\begin{cases}
 \frac{s_k}{1-a}Q^{1-a}+ O(Q^{1-\frac{1}{n_k}-a})+O(1) & \text{ if } 0<a< 1-\frac{1}{n_k} \text{ or } 1-\frac{1}{n_k} < a <1, \\
 s_kn_kQ^{\frac{1}{n_k}} + O(\log Q) & \text{ if } a = 1-\frac{1}{n_k}, \\
 s_k \log Q + O(1) & \text{ if } a=1.
\end{cases}.
\end{equation*}
\end{lem}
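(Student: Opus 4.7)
The plan is to deduce the asymptotic by partial summation from Theorem \ref{counting}. Let $A(t) := \sum_{\rN\fn \le t} 1 = s_k t + E(t)$, where Theorem \ref{counting} gives $E(t) = O(t^{1-1/n_k})$ for $t \ge 1$, and $A(t) = 0$ for $t < 1$. Writing the sum as a Stieltjes integral and integrating by parts,
\[
\sum_{\rN\fn \le Q} \rN\fn^{-a} = \int_{1^-}^{Q} t^{-a}\, dA(t) = A(Q)\,Q^{-a} + a\int_{1}^{Q} A(t)\, t^{-a-1}\, dt.
\]
I would substitute $A(t) = s_k t + E(t)$ into both pieces and treat the main and error contributions separately.

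For the main part, $s_k Q^{1-a} + a s_k \int_1^Q t^{-a}\, dt$ evaluates directly. When $a \neq 1$ this is $s_k Q^{1-a} + \frac{a s_k}{1-a}(Q^{1-a}-1) = \frac{s_k}{1-a}Q^{1-a} + O(1)$, while when $a=1$ it collapses to $s_k + s_k \log Q = s_k \log Q + O(1)$. This already supplies the $a=1$ row and the leading $\frac{s_k}{1-a} Q^{1-a}$ term in the first row; the crossover at $a = 1 - \tfrac{1}{n_k}$ is handled by observing that $\frac{s_k}{1-a} = s_k n_k$ there, giving the main term $s_k n_k Q^{1/n_k}$.

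For the error, I would estimate
\[
E(Q)\,Q^{-a} + a\int_1^Q |E(t)|\, t^{-a-1}\, dt \;\ll\; Q^{1-\frac{1}{n_k}-a} + \int_1^Q t^{-a-\frac{1}{n_k}}\, dt.
\]
The integral gives $O(Q^{1-a-1/n_k})$ when $a < 1-\tfrac{1}{n_k}$, $\log Q$ exactly when $a = 1 - \tfrac{1}{n_k}$, and $O(1)$ when $a > 1 - \tfrac{1}{n_k}$. Combining with the boundary term $E(Q)Q^{-a}$ (which is $O(Q^{1-1/n_k-a})$ in the first two ranges and $O(1)$ in the third) assembles the three error shapes in the statement.

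The only mild subtlety is the transition $a = 1 - \tfrac{1}{n_k}$: there the boundary term $E(Q)Q^{-a}$ is merely $O(1)$, so it is dominated by the $\log Q$ coming from the integral of $t^{-1}$, and everything fits consistently. No genuine obstacle arises; the argument is a direct application of Abel summation with case analysis on the exponent.
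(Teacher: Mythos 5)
Your proof is correct and is essentially the paper's own argument: both apply partial summation to the counting function $A(t)=\sum_{\rN\fn\le t}1=s_kt+O(t^{1-1/n_k})$ from Theorem \ref{counting}, split into main and error contributions, and resolve the three ranges of $a$ by evaluating $\int_1^Q t^{-a}\,dt$ and $\int_1^Q t^{-a-1/n_k}\,dt$. The handling of the crossover at $a=1-\tfrac{1}{n_k}$ matches the paper as well.
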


\begin{lem}\label{logpower}
For the constant $a=1-\frac{1}{n_k}$, we have
\begin{equation*}
\sum_{\substack{\rN\fn \le Q}} \frac{\log(\frac{Q}{\rN\fn})}{\rN\fn^{a}} = s_kn_k^2Q^{\frac{1}{n_k}}+O((\log Q)^2).
\end{equation*}
\end{lem}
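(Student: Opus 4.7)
The plan is to reduce this to the critical case of Lemma \ref{power} (where $a = 1 - 1/n_k$) by partial summation, using the integral representation
\[
\log(Q/\rN\fn) = \int_{\rN\fn}^Q \frac{dt}{t}.
\]
This turns the weighted sum on the left into an integral against the unweighted partial sum, which Lemma \ref{power} controls sharply.

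Specifically, I would exchange the order of summation and integration to write
\[
\sum_{\rN\fn \le Q} \frac{\log(Q/\rN\fn)}{\rN\fn^{a}} \;=\; \int_1^Q \frac{S(t)}{t}\,dt, \qquad S(t) := \sum_{\rN\fn \le t} \rN\fn^{-a},
\]
where the interchange is justified by Fubini since all summands are nonnegative. Lemma \ref{power} at the critical exponent supplies
\[
S(t) = s_k n_k\, t^{1/n_k} + O(\log t).
\]
Substituting and splitting the integral,
\[
s_k n_k \int_1^Q t^{1/n_k - 1}\,dt \;+\; O\!\left(\int_1^Q \frac{\log t}{t}\,dt\right) \;=\; s_k n_k^{2}\bigl(Q^{1/n_k} - 1\bigr) \;+\; O\!\bigl((\log Q)^2\bigr),
\]
and the additive constant $-s_k n_k^{2}$ is harmlessly absorbed into the $O((\log Q)^2)$ error term, yielding the claim.

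This is a routine Abel summation once Lemma \ref{power} is available, so I do not anticipate any real obstacle. The one observation worth making is that the size of the error term, $(\log Q)^2$, comes directly from integrating the $\log t$ error in the critical case of Lemma \ref{power}; one cannot hope to do better with this method unless the underlying counting function is sharpened.
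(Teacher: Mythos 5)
Your proposal is correct and matches the paper's proof, which likewise obtains the result from Lemma \ref{power} in the critical case $a=1-\frac{1}{n_k}$ together with partial summation. Writing $\log(Q/\rN\fn)$ as $\int_{\rN\fn}^{Q}\frac{dt}{t}$ and integrating the counting estimate is exactly the intended argument, and your bookkeeping of the main term and the $(\log Q)^2$ error is accurate.
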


Recall the function $\sigma_a$ defined in Section \ref{nots}.
\begin{lem}\label{01073}
If $a<0$, then
\[ \sum_{\rN\fn \le Q} \sigma_a(\fn) \le \sum_{\rN\fn \le Q} \sigma_a^2(\fn) \ll_a Q. \]
\end{lem}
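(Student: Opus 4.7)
My first observation is that the inequality $\sigma_a(\fn)\le \sigma_a(\fn)^2$ is immediate: since $(1)\mid \fn$ contributes $\rN(1)^a = 1$ to $\sigma_a(\fn)$, we have $\sigma_a(\fn)\ge 1$. So the whole content of the lemma is the upper bound $\sum_{\rN\fn\le Q}\sigma_a(\fn)^2\ll_a Q$.

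My plan is to expand the square and switch the order of summation, so the outer sum runs over pairs $(\fd_1,\fd_2)$. The joint divisibility $\fd_1\mid\fn$ and $\fd_2\mid\fn$ is equivalent to $[\fd_1,\fd_2]\mid\fn$, and Theorem \ref{counting} (applied to $\fn=[\fd_1,\fd_2]\fm$ with $\rN\fm\le Q/\rN[\fd_1,\fd_2]$) bounds the count of such $\fn$ with $\rN\fn\le Q$ by $O(Q/\rN[\fd_1,\fd_2])$. This yields
\[ \sum_{\rN\fn\le Q}\sigma_a(\fn)^2 \;\ll\; Q\sum_{\fd_1,\fd_2}\frac{(\rN\fd_1\,\rN\fd_2)^a}{\rN[\fd_1,\fd_2]}. \]

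The key algebraic input is the Dedekind-domain identity $\rN\fd_1\cdot \rN\fd_2=\rN[\fd_1,\fd_2]\cdot \rN(\fd_1,\fd_2)$, which turns the summand into $\rN[\fd_1,\fd_2]^{a-1}\rN(\fd_1,\fd_2)^a$. I then reparametrize by $\fe=(\fd_1,\fd_2)$, $\fa=\fd_1\fe^{-1}$, $\fb=\fd_2\fe^{-1}$, where $(\fa,\fb)=(1)$ and $[\fd_1,\fd_2]=\fe\fa\fb$. The double sum becomes
\[ \sum_{\fe}\rN\fe^{2a-1}\sum_{(\fa,\fb)=(1)}(\rN\fa\,\rN\fb)^{a-1}. \]

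Since $a<0$ forces $2a-1,\,a-1<-1$, every sum here converges absolutely. Dropping the coprimality constraint gives the crude but sufficient bound $\zeta_k(1-2a)\,\zeta_k(1-a)^2$, a finite constant depending only on $a$ and $k$; combined with the factor $Q$, this is the desired $\ll_a Q$. The only point that requires a little care is the gcd/lcm identity in $\cO_k$, which I will verify by localizing at each prime of $\cO_k$ and reducing to the discrete valuation ring case; I do not anticipate a serious obstacle.
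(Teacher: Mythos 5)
Your proof is correct and follows essentially the same route as the paper: both reduce the problem to bounding $Q\sum_{\fd,\fe}(\rN\fd\,\rN\fe)^a\rN[\fd,\fe]^{-1}$ via Theorem \ref{counting} and then show this double sum converges for $a<0$. The only difference is the final estimate—the paper bounds $(\rN\fd\,\rN\fe)^a\le\rN[\fd,\fe]^a$ and sums $d^2(\fq)\rN\fq^{a-1}$ over $\fq=[\fd,\fe]$ using $d(\fq)\ll\rN\fq^{\ep}$, whereas your exact gcd--lcm factorization evaluates the same sum as $\zeta_k(1-2a)\zeta_k(1-a)^2$, which is an equally valid (and arguably cleaner) way to close.
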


\begin{lem}\label{loginv}
We have
\[ \sum_{\rN\fn \le Q} \frac{1}{\rN\fn (\log(\frac{2Q}{\rN\fn}))^2} = O(1). \]
For any $a<0$ and $0<b \le 1$,
\[ \sum_{\rN\fn \le Q} \frac{\sigma_{a}^2(\fn)}{\rN\fn^b (\log(\frac{2Q}{\rN\fn}))^2} = O(Q^{1-b}). \]
\end{lem}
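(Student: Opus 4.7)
The plan is to estimate both sums by a dyadic decomposition of the summation range into shells $S_j := \{\fn : Q/2^{j+1} < \rN\fn \le Q/2^j\}$ for $j = 0, 1, \ldots, \lfloor \log_2 Q \rfloor + 1$. On each $S_j$ we have $(j+1)\log 2 \le \log(2Q/\rN\fn) \le (j+2)\log 2$ and $\rN\fn \asymp Q/2^j$, so the delicate factor $(\log(2Q/\rN\fn))^{-2}$ becomes the summable weight $(j+1)^{-2}$. This feature is precisely what makes the dyadic splitting effective here.

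For the first estimate, the uniform bound on $S_j$ is
\[ \frac{1}{\rN\fn(\log(2Q/\rN\fn))^2} \ll \frac{2^j}{Q(j+1)^2}. \]
By Theorem \ref{counting}, $|S_j| \le \#\{\rN\fn \le Q/2^j\} \ll Q/2^j$ (and $S_j$ is empty once $Q/2^j<1$), so the $S_j$-contribution is $\ll (j+1)^{-2}$; summing over $j$ gives $O(1)$.

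For the second estimate I apply the same scheme. On $S_j$,
\[ \frac{\sigma_a^2(\fn)}{\rN\fn^b(\log(2Q/\rN\fn))^2} \ll \frac{\sigma_a^2(\fn)\cdot 2^{bj}}{Q^b(j+1)^2}, \]
and Lemma \ref{01073} yields $\sum_{\fn \in S_j} \sigma_a^2(\fn) \le \sum_{\rN\fn \le Q/2^j} \sigma_a^2(\fn) \ll Q/2^j$. Hence the $S_j$-contribution is $\ll Q^{1-b} \cdot 2^{(b-1)j}/(j+1)^2$. Since $b \le 1$, the factor $2^{(b-1)j}$ is bounded by $1$ (and decays geometrically when $b<1$), so summing in $j$ produces $O(Q^{1-b})$.

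The proof does not present any real obstacle; the only point requiring mild care is that the dyadic sum must be truncated once $Q/2^j$ drops below $1$, but this is harmless since no ideals contribute below that scale. The whole argument is essentially bookkeeping around the two counting inputs (Theorem \ref{counting} and Lemma \ref{01073}) already in hand.
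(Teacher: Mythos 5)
Your proof is correct. It differs from the paper's in mechanism, though not in substance: the paper handles both sums by partial (Abel) summation, writing each as a boundary term plus an integral of the counting function $\sum_{\rN\fn\le t}1$ (resp.\ $\sum_{\rN\fn\le t}\sigma_a^2(\fn)$) against the derivative of the weight, and then estimating integrals such as $\int_1^Q t^{-1}(\log 2Q-\log t)^{-2}\,dt$. You instead decompose dyadically into shells $Q/2^{j+1}<\rN\fn\le Q/2^j$, freeze the weight on each shell, and invoke exactly the same counting inputs (Theorem \ref{counting} for the first sum, Lemma \ref{01073} for the second) to bound each shell's total, leaving a convergent sum $\sum_j (j+1)^{-2}$ (times $2^{(b-1)j}\le 1$ in the second case). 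The two routes are interchangeable here; your dyadic version avoids differentiating the weight and the attendant bookkeeping with the resulting integrals, at the cost of the (harmless) truncation once $Q/2^j<1$, which you correctly flag. Both arguments are complete and yield the stated bounds $O(1)$ and $O(Q^{1-b})$, uniformly in $0<b\le 1$.
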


In our proof of Theorem \ref{ig}, squarefree ideals are important. The next three lemmas involve such ideals.

\begin{lem}\label{muandphi}
We have
\[ \sum_{\rN\fn \le Q} \frac{\mu_k^2(\fn)}{\varphi(\fn)} = s_k\log Q + O(1). \]
\end{lem}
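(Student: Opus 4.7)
The plan is to write $\mu_k^2(\fn)/\varphi(\fn)$ as a Dirichlet convolution against $\fa\mapsto 1/\rN\fa$, exposing the $\zeta_k(s+1)$-type pole that produces the $s_k\log Q$ main term, and then to invoke Lemma~\ref{power} on the resulting inner sum. Define the multiplicative function $h$ on integral ideals of $\cO_k$ by $h(\cO_k) = 1$, $h(\fp) = 1/(\rN\fp(\rN\fp - 1))$, $h(\fp^2) = -1/(\rN\fp(\rN\fp - 1))$, and $h(\fp^j) = 0$ for $j \ge 3$. A direct check at each prime power (equivalently, at the Dirichlet-series level one has $\sum_\fn \mu_k^2(\fn)\varphi(\fn)^{-1}\rN\fn^{-s} = \zeta_k(s+1) H(s)$ with $H(s) = \sum_\fb h(\fb)\rN\fb^{-s}$) yields the identity
\[ \frac{\mu_k^2(\fn)}{\varphi(\fn)} = \sum_{\fa\fb = \fn} \frac{h(\fb)}{\rN\fa}. \]

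Substituting this and swapping the order of summation, then invoking Lemma~\ref{power} with $a=1$, gives
\[ \sum_{\rN\fn \le Q} \frac{\mu_k^2(\fn)}{\varphi(\fn)} = \sum_{\rN\fb \le Q} h(\fb)\bigl(s_k\log(Q/\rN\fb) + O(1)\bigr). \]
Splitting $\log(Q/\rN\fb) = \log Q - \log\rN\fb$, the lemma reduces to three bookkeeping facts about $h$: (i) $\sum_\fb h(\fb) = \prod_\fp(1 + h(\fp) + h(\fp^2)) = 1$, since the local factors telescope; (ii) $\sum_\fb |h(\fb)|\rN\fb^\delta < \infty$ for any $\delta < 1/2$, by comparing Euler factors with a tail sum $\sum_\fp\rN\fp^{-2+2\delta}$; (iii) consequently $\sum_{\rN\fb > Q} |h(\fb)| \ll Q^{-\delta}$ and $\sum_\fb |h(\fb)|\log\rN\fb < \infty$.

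Plugging these in, the main term becomes $s_k(\log Q)(1 + O(Q^{-\delta})) = s_k\log Q + O(1)$, and the remaining $\log\rN\fb$-piece together with the error piece are each $O(1)$, giving the lemma. The calculation is elementary, but the step that requires actual thought --- and thus the main obstacle worth naming --- is the tail estimate (ii)--(iii). A naive bound $|h(\fb)| \ll 1/\rN\fb$ on its own does not make $\sum_{\rN\fb>Q}|h(\fb)|$ decay quickly enough to absorb the $\log Q$ factor out front. What saves us is the specific structure of $h$: it is supported on ideals with all prime exponents in $\{0,1,2\}$, and the cancellation $h(\fp) + h(\fp^2) = 0$ at each prime (which is exactly why $H(0) = 1$) is what upgrades the naive bound to the shifted moment $\sum_\fb |h(\fb)|\rN\fb^\delta < \infty$ for some $\delta > 0$.
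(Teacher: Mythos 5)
Your proof is correct, and it takes a genuinely different (and somewhat leaner) route than the paper. The paper sets $a_\fn = \mu_k^2(\fn)\rN\fn/\varphi(\fn)$, factors its Dirichlet series as $\zeta_k(s)$ times a series convergent for $\Re(s)>\tfrac12$, invokes an external stability theorem to obtain the power-saving asymptotic $\sum_{\rN\fn\le Q}a_\fn = s_kQ + O(Q^{1-1/n_k})$ (or $O(Q^{1/2+\ep})$ when $n_k=2$), and then recovers the lemma by partial summation. You instead factor off $\zeta_k(s+1)$, writing $\mu_k^2(\fn)/\varphi(\fn)$ as the convolution of $\fa\mapsto 1/\rN\fa$ with a multiplicative $h$ supported on exponents in $\{1,2\}$; your local computation of $h(\fp),h(\fp^2)$ checks out, the identity $h(\fp)+h(\fp^2)=0$ correctly gives $\sum_\fb h(\fb)=1$, and the tail bound $\sum_{\rN\fb>Q}|h(\fb)|\ll Q^{-\delta}$ for $\delta<\tfrac12$ is exactly what is needed to absorb the $\log Q$ factor into $O(1)$. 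The trade-off: your argument is self-contained modulo Lemma~\ref{power} (which the paper proves from Theorem~\ref{counting}) and avoids both the stability theorem and partial summation, while the paper's route produces a stronger intermediate statement (an asymptotic count of $\sum a_\fn$ with power-saving error) that is not actually needed for the lemma. Both yield the same final error term $O(1)$, so nothing downstream is affected.
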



\begin{lem}\label{sqfrc}
We have
\[ \sum_{\rN\fn \le Q} \mu_k^2(\fn) = \frac{s_k}{\zeta_k(2)}Q+O(Q^{1-\frac{1}{n_k}+\ep_k}), \]
where $\ep_k = 0$ for $n_k \ge 3$ and $\ep_k>0$ is arbitrarily small for $n_k = 2$.
\end{lem}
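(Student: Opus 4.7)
The plan is to mimic the classical identity $\mu^2(n) = \sum_{d^2 \mid n} \mu(d)$, which for ideals reads $\mu_k^2(\fn) = \sum_{\fd^2 \mid \fn} \mu_k(\fd)$ (this follows from factoring $\fn$ into prime powers and noting $\mu_k^2$ is the indicator of squarefree ideals). Writing $\fn = \fd^2 \fm$ and interchanging the order of summation gives
\[ \sum_{\rN\fn \le Q} \mu_k^2(\fn) = \sum_{\rN\fd \le \sqrt{Q}} \mu_k(\fd) \sum_{\rN\fm \le Q/\rN\fd^2} 1. \]

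Next, I would apply Theorem \ref{counting} to the inner sum, obtaining
\[ \sum_{\rN\fm \le Q/\rN\fd^2} 1 = \frac{s_k Q}{\rN\fd^2} + O\!\left( \bigl(Q/\rN\fd^2\bigr)^{1-1/n_k}\right). \]
The main term becomes $s_k Q \sum_{\rN\fd \le \sqrt Q} \mu_k(\fd)/\rN\fd^2$, which I extend to the full sum using $\sum_{\fd} \mu_k(\fd)/\rN\fd^2 = 1/\zeta_k(2)$ (Euler product of $\zeta_k$). The tail $\sum_{\rN\fd > \sqrt{Q}} 1/\rN\fd^2$ is $O(Q^{-1/2})$ by partial summation against Theorem \ref{counting} (or by Lemma \ref{power} with $a$ slightly larger than $1$), contributing $O(Q^{1/2}) = O(Q^{1-1/n_k})$ since $n_k \ge 2$.

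For the error term, after summing I must bound
\[ Q^{1-1/n_k} \sum_{\rN\fd \le \sqrt{Q}} \frac{1}{\rN\fd^{2-2/n_k}}. \]
Here the case division arises: when $n_k \ge 3$ the exponent $2 - 2/n_k \ge 4/3 > 1$, so the sum converges by Lemma \ref{power}, yielding the clean bound $O(Q^{1-1/n_k})$. When $n_k = 2$ the exponent equals $1$, and Lemma \ref{power} only gives $\sum_{\rN\fd \le \sqrt{Q}} 1/\rN\fd = O(\log Q)$, producing a logarithmic loss of $O(Q^{1/2} \log Q)$, which is absorbed into $O(Q^{1-1/n_k + \ep_k})$ for arbitrary $\ep_k > 0$.

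The main obstacle I expect is the quadratic case $n_k = 2$, where the natural error exponent exactly matches the pole of the zeta-like series and forces the introduction of the $\ep_k$. In principle one could avoid the $\ep_k$ by using the full strength of \cite[Theorem 3]{LTT22} with a sharper error, but Theorem \ref{counting} as stated only provides the weak exponent $1 - 1/n_k$, so the logarithm is unavoidable with the tools at hand. Everything else is a routine Dirichlet hyperbola-style manipulation, with the Euler product identity $\sum_\fd \mu_k(\fd)/\rN\fd^2 = 1/\zeta_k(2)$ supplying the correct constant.
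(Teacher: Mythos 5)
Your proposal is correct and is essentially the paper's argument: the paper phrases the identity $\mu_k^2(\fn)=\sum_{\fd^2\mid\fn}\mu_k(\fd)$ as an inclusion--exclusion over prime ideals $\fq$ with $\rN\fq_1\cdots\rN\fq_m\le\sqrt{Q}$, but it then arrives at exactly your main term $s_kQ\sum_{\rN\fd\le\sqrt{Q}}\mu_k(\fd)/\rN\fd^2$ (extended to $s_kQ/\zeta_k(2)$) and your error term $Q^{1-1/n_k}\sum_{\rN\fd\le\sqrt{Q}}\rN\fd^{-(2-2/n_k)}$, with the same dichotomy producing $\ep_k>0$ only for $n_k=2$.
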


\begin{lem}\label{0108}
For any squarefree integral ideal $\frr$ in $\cO_k$, we have
\[ \sum_{\substack{\rN\fn \le Q \\ (\fn,\frr) = (1)}} \mu_k^2(\fn) = \frac{s_k}{\zeta_k(2)} \frac{\rN\frr}{\kappa(\frr)}Q +  O(Q^{1-\frac{1}{n_k}+\ep_k} \sigma_{-\frac{1}{n_k+1}}(\frr)), \]
where $\ep_k$ is as in Lemma \ref{sqfrc}.
\end{lem}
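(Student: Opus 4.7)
The plan is to perform two nested Möbius inversions, following the template of the proof of Lemma \ref{sqfrc}. First, I would apply the identity $\mu_k^2(\fn) = \sum_{\fb^2 \mid \fn} \mu_k(\fb)$ and write $\fn = \fb^2\fc$. Since $\frr$ is squarefree, the condition $(\fn,\frr) = (1)$ is equivalent to $(\fb,\frr)=(\fc,\frr)=(1)$, yielding
\[
S := \sum_{\substack{\rN\fn \le Q \\ (\fn,\frr)=(1)}} \mu_k^2(\fn) = \sum_{\substack{\rN\fb \le Q^{1/2} \\ (\fb,\frr)=(1)}} \mu_k(\fb)\,T(Q/\rN\fb^2),
\]
where $T(Y) := \#\{\fc : \rN\fc \le Y,\ (\fc,\frr)=(1)\}$.

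Next, I would estimate $T(Y)$ via the second Möbius inversion $\mathbf{1}_{(\fc,\frr)=(1)} = \sum_{\fe \mid (\fc,\frr)} \mu_k(\fe)$, reducing the inner count to $\#\{\fg : \rN\fg \le Y/\rN\fe\}$ for $\fe \mid \frr$ with $\rN\fe \le Y$, to which Theorem \ref{counting} applies. Completing the $\fe$-sum from $\rN\fe \le Y$ to all $\fe \mid \frr$ by the crude tail bound $1/\rN\fe < Y^{-1/n_k}\rN\fe^{-(1-1/n_k)}$ (valid when $\rN\fe > Y$), I would obtain
\[
T(Y) = s_k Y\,\frac{\varphi(\frr)}{\rN\frr} + O\bigl(Y^{1-1/n_k}\,\sigma_{-(1-1/n_k)}(\frr)\bigr).
\]

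Substituting into the $\fb$-sum, the main contribution is $s_k Q\,\frac{\varphi(\frr)}{\rN\frr}\sum_{(\fb,\frr)=(1)}\mu_k(\fb)/\rN\fb^2$, where completing the $\fb$-sum to all ideals costs only $O(Q^{1/2})$ by partial summation against Theorem \ref{counting}. The Euler factorization
\[
\sum_{(\fb,\frr)=(1)}\frac{\mu_k(\fb)}{\rN\fb^2} = \frac{1}{\zeta_k(2)}\prod_{\fp \mid \frr}\frac{\rN\fp^2}{\rN\fp^2 - 1}
\]
combines with $\varphi(\frr)/\rN\frr = \prod_{\fp \mid \frr}(\rN\fp-1)/\rN\fp$; the telescoping $\rN\fp^2-1 = (\rN\fp-1)(\rN\fp+1)$ collapses the product to $\prod_{\fp\mid\frr}\rN\fp/(\rN\fp+1) = \rN\frr/\kappa(\frr)$, delivering the claimed main term $\tfrac{s_k}{\zeta_k(2)}\tfrac{\rN\frr}{\kappa(\frr)}Q$.

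The accumulated error is
\[
O\Bigl(Q^{1-1/n_k}\,\sigma_{-(1-1/n_k)}(\frr)\sum_{\rN\fb \le Q^{1/2}} \rN\fb^{-2(1-1/n_k)}\Bigr).
\]
For $n_k \ge 3$ the exponent $2-2/n_k > 1$ and the $\fb$-sum is $O(1)$ by convergence; for $n_k = 2$ the exponent equals $1$ and the sum is $O(\log Q)$, which I would absorb into a factor $Q^{\ep_k}$. Since $-(1-1/n_k) < -1/(n_k+1)$ for all $n_k \ge 2$, monotonicity of $a \mapsto \sigma_a(\frr)$ in $a < 0$ allows me to replace $\sigma_{-(1-1/n_k)}(\frr)$ with the larger $\sigma_{-1/(n_k+1)}(\frr)$ to match the stated form. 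The only genuine obstacle is the borderline divergence of $\sum_\fb \rN\fb^{-1}$ in the quadratic case, which is precisely what forces $\ep_k > 0$ when $n_k = 2$.
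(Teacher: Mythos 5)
Your argument is correct, but it takes a genuinely different route from the paper's. The paper removes the coprimality condition \emph{first}: it convolves against the Liouville function supported on $\cD_\frr=\{\fd:\fp\mid\fd\Rightarrow\fp\mid\frr\}$, via the identity $\sum_{\fd\fm=\fn,\ \fd\in\cD_\frr}\lambda(\fd)\mu_k^2(\fm)=\mu_k^2(\fn)\mathbf{1}_{(\fn,\frr)=(1)}$, and then quotes the unconditioned squarefree count of Lemma \ref{sqfrc} for the inner sum; the main term $\sum_{\fd\in\cD_\frr}\lambda(\fd)/\rN\fd=\prod_{\fp\mid\frr}(1+\rN\fp^{-1})^{-1}=\rN\frr/\kappa(\frr)$ then drops out immediately, and the error is controlled by $\sum_{\fd\in\cD_\frr}\rN\fd^{-1+1/n_k-\ep_k}\ll\sigma_{-1/(n_k+1)}(\frr)$. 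You instead detect squarefreeness first via $\mu_k^2(\fn)=\sum_{\fb^2\mid\fn}\mu_k(\fb)$ and then strip the coprimality condition by M\"obius inversion over divisors of $\frr$, going back directly to Theorem \ref{counting}; your telescoping of $\varphi(\frr)/\rN\frr$ against $\prod_{\fp\mid\frr}(1-\rN\fp^{-2})^{-1}$ correctly recovers $\rN\frr/\kappa(\frr)$, and your handling of the $n_k=2$ borderline (the divergent $\sum_\fb\rN\fb^{-1}$ absorbed into $Q^{\ep_k}$) matches exactly where the paper's $\ep_k$ comes from in Lemma \ref{sqfrc}. What each approach buys: the paper's proof is shorter because it reuses Lemma \ref{sqfrc} as a black box, whereas yours is self-contained (it effectively reproves Lemma \ref{sqfrc} with the coprimality condition built in) and even yields the slightly sharper $\frr$-dependence $\sigma_{-(1-1/n_k)}(\frr)\le\sigma_{-1/(n_k+1)}(\frr)$, which you rightly note can be relaxed to the stated form by monotonicity of $a\mapsto\sigma_a(\frr)$.
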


The six statements in the next four lemmas have their respective analogues as (6), (7), (8), (10), (11), and (12) in Graham's paper \cite{Gr78}. In all the lemmas above, the implied constants are effectively computable.
Before we state the lemmas, we need to introduce a zero-free region for the Dedekind zeta function $\zeta_k(s)$. 

\begin{theorem}\label{ik04}\cite[Theorem 5.33]{IK04}
There exists an absolute constant $c>0$ such that $\zeta_k(s) (s=\sigma+it)$ has no zero in the region
\[  \sigma \ge 1-\frac{c}{n_k^2 \log |d_k| (|t|+3)^{n_k}}, \]
except possibly a simple real zero $\beta_0 < 1$.
\end{theorem}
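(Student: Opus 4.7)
The plan is to carry out the classical Hadamard--de la Vall\'ee Poussin argument for zero-free regions, adapted to the Dedekind zeta function. The backbone is the non-negativity $3+4\cos\theta+\cos 2\theta\ge 0$, which, combined with the Dirichlet series $-\zeta_k'/\zeta_k(s)=\sum_\fn \Lambda(\fn)\rN\fn^{-s}$ valid for $\sigma>1$, yields
\[ -3\,\mathrm{Re}\,\frac{\zeta_k'}{\zeta_k}(\sigma) -4\,\mathrm{Re}\,\frac{\zeta_k'}{\zeta_k}(\sigma+it) -\mathrm{Re}\,\frac{\zeta_k'}{\zeta_k}(\sigma+2it) \ge 0. \]
Assuming $\rho_0 = \beta+i\gamma$ is a hypothetical nontrivial zero extremely close to the edge $\sigma=1$, the strategy is to bound each of the three terms and then solve for $1-\beta$.

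The second ingredient is the Hadamard factorization of the completed zeta function $\xi_k(s) = s(s-1)|d_k|^{s/2}G(s)\zeta_k(s)$, where $G(s)$ is the product of the $n_k$ archimedean gamma factors dictated by the functional equation. Logarithmic differentiation together with Stirling's formula applied to $G$ gives
\[ \mathrm{Re}\,\frac{\zeta_k'}{\zeta_k}(\sigma+it) = \sum_\rho \mathrm{Re}\,\frac{1}{\sigma+it-\rho} - \mathrm{Re}\,\frac{1}{\sigma+it-1} + O\bigl(\log|d_k| + n_k\log(|t|+3)\bigr). \]
Because every term in $\sum_\rho \mathrm{Re}\,(s-\rho)^{-1}$ is nonnegative for $\mathrm{Re}(s)>1$, one can retain only the contribution from $\rho_0$ (and its complex conjugate) as a one-sided estimate. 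Substituting into the positivity inequality with $t=\gamma$, the middle term provides $-4/(\sigma-\beta)$, the pole at $s=1$ provides $3/(\sigma-1)$, and the remaining archimedean pieces contribute an error of size $\log|d_k|+n_k\log(|\gamma|+3)$.

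This yields an inequality of the form
\[ \frac{4}{\sigma-\beta} \le \frac{3}{\sigma-1} + C\bigl(\log|d_k|+n_k\log(|\gamma|+3)\bigr). \]
Optimizing $\sigma$ just to the right of $1$ (the standard choice is $\sigma-1\asymp 1/(\log|d_k|+n_k\log(|\gamma|+3))$) produces $1-\beta \gg 1/(\log|d_k|+n_k\log(|\gamma|+3))$, from which the stated (weaker) bound $1-\beta\ge c/(n_k^2\log|d_k|(|t|+3)^{n_k})$ follows by elementary monotone inequalities. At $\gamma=0$ the $4$-term and the $1$-term coalesce and this method breaks, leaving open the possibility of a single simple real exceptional zero $\beta_0$.

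The main obstacle beyond the classical $\Q$-case is the careful tracking of $k$-dependent constants through the $n_k$ archimedean gamma factors; this is essentially the only point where working over a general number field differs from the standard argument over $\Q$. Stirling gives $|\Gamma'/\Gamma(\sigma+it)|=\log(|t|+3)+O(1)$ for each factor, so summing over the $n_k$ factors produces the $n_k\log(|t|+3)$ term. Ensuring uniformity as $|t|\to 0$ and in the conductor $|d_k|$ is technical but routine; the existence (as opposed to the possibility) of the Siegel zero $\beta_0$ need not be resolved here, since the theorem only asserts its possible presence.
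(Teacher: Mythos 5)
The paper does not actually prove this statement; it is quoted directly from Iwaniec--Kowalski \cite[Theorem 5.33]{IK04}, so the only benchmark is the standard proof in that source, which is precisely the Hadamard--de la Vall\'ee Poussin argument you outline. Your sketch of the main mechanism --- the positivity of $3+4\cos\theta+\cos 2\theta$, the Hadamard/partial-fraction expansion of $\mathrm{Re}\,\zeta_k'/\zeta_k$ with error $O(\mathcal{L})$ where $\mathcal{L}:=\log|d_k|+n_k\log(|t|+3)$, discarding all zeros except $\rho_0$ by nonnegativity, and optimizing $\sigma-1\asymp 1/\mathcal{L}$ --- is correct and does yield the asserted region for zeros $\beta+i\gamma$ whose ordinate is not too small; the passage from $1-\beta\gg 1/\mathcal{L}$ to the stated denominator $n_k^2\log\bigl(|d_k|(|t|+3)^{n_k}\bigr)$ is also fine.

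There is, however, a genuine gap in the part of the statement your argument does not reach: that the only possible exception is a \emph{single, simple, real} zero. Your displayed inequality $4/(\sigma-\beta)\le 3/(\sigma-1)+C\mathcal{L}$ silently drops the pole contribution $4\,\mathrm{Re}\,(\sigma+i\gamma-1)^{-1}=4(\sigma-1)/((\sigma-1)^2+\gamma^2)$, which is only admissible when $|\gamma|\gg\sigma-1$ --- exactly outside the regime where the exceptional zero lives. To complete the proof one must run a companion argument at real $s=\sigma$: a complex zero $\beta\pm i\gamma$ with small $|\gamma|$ contributes $2(\sigma-\beta)/((\sigma-\beta)^2+\gamma^2)\approx 2/(\sigma-\beta)$ to $\sum_\rho\mathrm{Re}\,(\sigma-\rho)^{-1}$, and likewise two distinct real zeros or one double real zero contribute at least $2/(\sigma-\beta)$; combined with $-\zeta_k'/\zeta_k(\sigma)\le(\sigma-1)^{-1}+O(\mathcal{L})$ this forces $1-\beta\gg 1/\mathcal{L}$ for every such configuration, leaving at most one simple real zero unexcluded. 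Your closing remark that ``the existence \dots need not be resolved'' conflates this with the question of whether $\beta_0$ actually occurs; what the theorem asserts, and what your write-up omits, is the uniqueness, reality, and simplicity of the exception.
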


\begin{defn}\label{Siegel}
If the zero $\beta_0$ exists, then it is called a Siegel zero.
\end{defn}

Our next lemmas now explicitly denote any (possible) dependence on $\beta_0$. 


\begin{lem}\label{678ing}
For any $A>0$, the following bounds hold:
\begin{equation}\label{6ing}
\sum_{\rN\fn \le Q} \mu_k(\fn)- \frac{Q^{\beta_0}}{\beta_0\zeta_k^{'}(\beta_0)} \ll_A Q(\log 2Q)^{-A}.
\end{equation}
The term involving ${\beta_0}$ exists if and only if the Siegel zero exists (and similarly for the expressions below). Second,
\begin{equation}\label{7ing}
\sum_{\rN\fn \le Q} \frac{\mu_k(\fn)}{\rN\fn}-\frac{Q^{\beta_0-1}}{(\beta_0-1)\zeta_k^{'}(\beta_0)} \ll_A (\log 2Q)^{-A}.
\end{equation}
Third,
\begin{equation}\label{8ing}
 \sum_{\rN\fn \le Q} \frac{\mu_k(\fn) \log \rN\fn}{\rN\fn} = -\frac{1}{s_k} +\frac{Q^{\beta_0-1}[(\beta_0-1)\log Q-1]}{(\beta_0-1)^2\zeta_k^{'}(\beta_0)}+O_A((\log 2Q)^{-A}). 
\end{equation}

\end{lem}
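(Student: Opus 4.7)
The plan is to apply truncated Perron's formula and contour shifting to each of the three sums, exploiting the zero-free region of $\zeta_k(s)$ from Theorem \ref{ik04}. For (\ref{6ing}), start from the Dirichlet series identity $1/\zeta_k(s) = \sum_{\fn}\mu_k(\fn)\rN\fn^{-s}$ for $\Re s > 1$, take $c = 1 + 1/\log Q$, and write
\[
\sum_{\rN\fn \le Q}\mu_k(\fn) \;=\; \frac{1}{2\pi i}\int_{c-iT}^{c+iT} \frac{Q^s}{s\,\zeta_k(s)}\,ds \;+\; R(Q,T),
\]
with $R(Q,T)$ the standard Perron truncation error. I would then shift the contour to the line $\sigma = 1 - \eta(T)$, where $\eta(T) \asymp 1/(n_k^2\log|d_k|(T+3)^{n_k})$ is the zero-free width from Theorem \ref{ik04}. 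The integrand is regular at $s = 1$ (the zero of $1/\zeta_k$ cancels the pole of $1/s$), so the only relevant pole inside the shifted rectangle is the possible simple pole at $s = \beta_0$, whose residue is exactly $Q^{\beta_0}/[\beta_0\,\zeta_k'(\beta_0)]$. The left vertical segment and horizontal connectors are estimated using polynomial-in-$\log(|t|{+}3)$ upper bounds for $|1/\zeta_k(s)|$ throughout the zero-free region. A suitable choice of $T$, for instance $T = \exp((\log Q)^{1/(2n_k)})$, then yields the error $O_A(Q(\log 2Q)^{-A})$ as stated.

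For (\ref{7ing}) and (\ref{8ing}) I would run the analogous contour argument with the integrands
\[
\frac{Q^s}{s\,\zeta_k(s+1)} \qquad\text{and}\qquad \frac{Q^s}{s}\cdot\frac{\zeta_k'(s+1)}{\zeta_k(s+1)^2},
\]
arising from $\sum_{\fn}\mu_k(\fn)\rN\fn^{-1-s} = 1/\zeta_k(s+1)$ and its derivative. For (\ref{7ing}), the pole of $1/s$ at $s = 0$ is cancelled by the zero of $1/\zeta_k(s+1)$ at $s = 0$, so no $Q$-independent main term appears; the Siegel zero contributes a simple pole at $s = \beta_0 - 1$ with residue $Q^{\beta_0-1}/[(\beta_0-1)\zeta_k'(\beta_0)]$. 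For (\ref{8ing}), the Laurent expansion $\zeta_k(s+1) = s_k/s + c_0 + c_1 s + \cdots$ gives $\zeta_k'(s+1)/\zeta_k(s+1)^2 = -1/s_k + O(s)$ near $s = 0$, so the integrand has a simple pole at $s = 0$ with residue exactly $-1/s_k$; this is the source of the constant term in (\ref{8ing}). At $s = \beta_0 - 1$ the denominator acquires a double zero, producing a double pole whose residue is
\[
\frac{d}{du}\left.\frac{Q^{\beta_0-1+u}}{(\beta_0-1+u)\,\zeta_k'(\beta_0)}\right|_{u=0} \;=\; \frac{Q^{\beta_0-1}\bigl[(\beta_0-1)\log Q - 1\bigr]}{(\beta_0-1)^2\,\zeta_k'(\beta_0)},
\]
matching the main term in (\ref{8ing}). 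An alternative is to derive (\ref{7ing}) and (\ref{8ing}) from (\ref{6ing}) by Abel summation: writing $M_k(Q) = \sum_{\rN\fn\le Q}\mu_k(\fn)$, the identity $\sum_{\rN\fn\le Q}\mu_k(\fn)/\rN\fn = M_k(Q)/Q + \int_1^Q M_k(t)t^{-2}\,dt$ and integration-by-parts for the $\log$-weighted sum produce the same main terms, with any $Q$-independent constants (depending on $\beta_0$ and $\zeta_k'(\beta_0)$) absorbed into the $O_A((\log 2Q)^{-A})$ error after an obvious renaming of $A$.

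The main obstacle is producing effective polynomial bounds for $1/\zeta_k(s)$ and $\zeta_k'(s)/\zeta_k(s)^2$ throughout the zero-free region of Theorem \ref{ik04}, uniformly in the height $|t|$ and with explicit control over the dependence on $n_k$ and $|d_k|$. These are classical in spirit—Landau for $\mathbb{Q}$, and Mitsui, Lagarias--Odlyzko and others for general number fields—but require the combination of the Hadamard factorization of the completed zeta function $\xi_k(s)$, the functional equation to bound $|\zeta_k|$ to the left of the critical strip, and the Borel--Carathéodory lemma to convert a zero-free region into quantitative upper bounds for $1/\zeta_k$. Once these growth bounds are in hand, the contour shifts and residue computations above deliver the three estimates directly, and the effective nature of the bounds ensures that all implied constants may be taken effective as claimed.
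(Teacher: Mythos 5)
Your proposal follows essentially the same route as the paper: Perron's formula applied to $1/\zeta_k(s)$, $1/\zeta_k(s+1)$, and $\zeta_k'(s+1)/\zeta_k(s+1)^2$, a contour shift into the zero-free region of Theorem \ref{ik04}, and residues at $s=\beta_0$ (resp.\ $\beta_0-1$, simple or double) and at $s=0$, all of which you compute correctly. Two points, however, are genuine gaps as written.

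First, polynomial-in-$\log(|t|+3)$ bounds for $|1/\zeta_k(s)|$ do \emph{not} hold ``throughout the zero-free region'' in exactly the case this lemma is about: if the Siegel zero exists, $\beta_0$ lies inside that region, and $1/\zeta_k(s)\sim 1/(\zeta_k'(\beta_0)(s-\beta_0))$ blows up there. If your shifted line $\sigma=1-\eta(T)$ happens to pass close to $\beta_0$, the left-edge integral is not controlled by the bounds you invoke. The paper handles this explicitly by a dichotomy: either (after shrinking the constant in the contour) $\beta_0$ lies to the \emph{left} of the contour, in which case the residue term is omitted and $Q^{\beta_0}$ is absorbed into the error, or $\beta_0-\sigma_1>\frac12(1-\sigma_1)$, in which case integrating $-\zeta_k'/\zeta_k(u+it)=-1/(u+it-\beta_0)+1/(u+it-1)+O(1)$ from $\sigma_1$ to $1+1/\psi(10)$ yields $\log|1/\zeta_k|=O(1)$ on the low-height part of the contour, uniformly in $\beta_0$. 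Some such device is required, since the later lemmas track the dependence on $1-\beta_0$ and $|\zeta_k'(\beta_0)|$ explicitly. A smaller point: the ``standard Perron truncation error'' needs care here because many ideals can share the same norm; the paper uses the Liu--Ye form of Perron's formula (its Theorem \ref{ly}) together with the short-interval ideal count supplied by Theorem \ref{counting} to control the coefficient sum near $n=Q$.

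Second, your fallback derivation of (\ref{7ing}) and (\ref{8ing}) from (\ref{6ing}) by Abel summation does not close as described: the integral $\int_1^Q M_k(t)t^{-2}\,dt$ converges to a genuine constant plus a decaying tail, and a nonzero constant cannot be ``absorbed into $O_A((\log 2Q)^{-A})$,'' which tends to $0$. One must separately identify that constant --- it is $0$ for (\ref{7ing}) and $-1/s_k$ for (\ref{8ing}), obtainable from $1/\zeta_k(s)\to 0$ and $\zeta_k'(s)/\zeta_k(s)^2\to -1/s_k$ as $s\to 1^{+}$ --- which is precisely what the residue computation at $s=0$ in your primary contour argument accomplishes. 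The contour route, which is the one the paper takes, is the one to keep.
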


For convenience, define 
\[ C_{\beta_0}^{'}:=\begin{cases} \max\{1,\frac{1}{(\beta_0-1)^2|\zeta_k^{'}(\beta_0)|}\} & \text{ if } \beta_0 \text{ exists}, \\ 1 & \text{otherwise}. \end{cases} \] This quantity will appear frequently in our bounds.

\begin{lem}\label{10ing}
For any squarefree integral ideal $\frr$ in $\cO_k$ and any $A>0$,
\[  \sum_{\substack{\rN\fn \le Q \\ (\fn,\frr) = (1)}} \frac{\mu_k(\fn)}{\rN\fn} \log(\frac{Q}{\rN\fn}) = \frac{\rN\frr}{s_k\varphi(\frr)} +C_{\frr,\beta_0}Q^{\beta_0-1}+ O_A(C_{\beta_0}^{'}\sigma_{-\frac12}(\frr)(\log 2Q)^{-A}), \]
where $C_{\frr,\beta_0}$ is a constant depending only on $\frr,\beta_0$, and $|C_{\frr,\beta_0}| \le C_{\beta_0}^{'}\sigma_{-\frac12}(\frr)$.
\end{lem}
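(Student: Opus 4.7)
The plan is to first handle the baseline case $\frr = (1)$ and then use a Dirichlet convolution identity to reduce the general case to it. Writing $\log(Q/\rN\fn) = \log Q - \log \rN\fn$, invoking (\ref{7ing}) and (\ref{8ing}) of Lemma \ref{678ing}, and observing that the two terms proportional to $Q^{\beta_0-1}\log Q$ cancel exactly, one obtains
\[
T(Q, (1)) := \sum_{\rN\fn \le Q}\frac{\mu_k(\fn)}{\rN\fn}\log\frac{Q}{\rN\fn} = \frac{1}{s_k} + \frac{Q^{\beta_0-1}}{(\beta_0-1)^2\zeta_k^{'}(\beta_0)} + O_A\bigl((\log 2Q)^{-A}\bigr),
\]
which is precisely the lemma's claim in the case $\frr=(1)$.

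For general squarefree $\frr$, I would exploit the Euler-product factorisation
\[
\sum_{(\fn,\frr)=(1)}\frac{\mu_k(\fn)}{\rN\fn^s} = \frac{1}{\zeta_k(s)}\prod_{\fp \mid \frr}\bigl(1-\rN\fp^{-s}\bigr)^{-1},
\]
which encodes the convolution identity $\mu_k(\fn)\mathbf{1}_{(\fn,\frr)=(1)} = \sum_{\fn=\fa\fb,\,\fa \mid \frr^\infty}\mu_k(\fb)$, where $\fa \mid \frr^\infty$ abbreviates the condition that every prime divisor of $\fa$ divides $\frr$. Substituting this and swapping summations reduces the lemma's sum to
\[
\sum_{\substack{\fa \mid \frr^\infty \\ \rN\fa \le Q}}\frac{1}{\rN\fa}\,T\!\left(\frac{Q}{\rN\fa},(1)\right),
\]
into which the $\frr=(1)$ asymptotic is inserted termwise, avoiding any recursive Möbius inversion.

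Plugging in produces three pieces matching the three parts of the lemma. The main term $\frac{1}{s_k}\sum_{\fa \mid \frr^\infty,\,\rN\fa\le Q}\rN\fa^{-1}$ is completed to the full Euler product $\prod_{\fp \mid \frr}(1-\rN\fp^{-1})^{-1} = \rN\frr/\varphi(\frr)$, producing the claimed $\rN\frr/(s_k\varphi(\frr))$; the completion tail is controlled via $\rN\fa^{-1}\le Q^{-1/2}\rN\fa^{-1/2}$, yielding a $Q^{-1/2}\sigma_{-\frac12}(\frr)$-type contribution that is negligible against the target error. The Siegel piece $\frac{Q^{\beta_0-1}}{(\beta_0-1)^2\zeta_k^{'}(\beta_0)}\sum_{\fa\mid\frr^\infty,\rN\fa\le Q}\rN\fa^{-\beta_0}$ similarly completes to define $C_{\frr,\beta_0}$. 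Since $\beta_0 > 1/2$, the Euler product $\prod_{\fp\mid\frr}(1-\rN\fp^{-\beta_0})^{-1}$ is dominated by $\prod_{\fp\mid\frr}(1-\rN\fp^{-1/2})^{-1} \ll_{n_k}\sigma_{-\frac12}(\frr)$---one uses $(1-x)^{-1}=(1+x)(1-x^2)^{-1}$ to compare with $\prod(1+\rN\fp^{-1/2})$, absorbing a bounded constant from the finitely many primes with $\rN\fp\le 4$---delivering $|C_{\frr,\beta_0}|\le C_{\beta_0}^{'}\sigma_{-\frac12}(\frr)$.

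The main obstacle is showing that the residual error $\sum_{\fa\mid\frr^\infty,\,\rN\fa\le Q}\rN\fa^{-1}(\log(2Q/\rN\fa))^{-A}$ is at most $C_{\beta_0}^{'}\sigma_{-\frac12}(\frr)(\log 2Q)^{-A}$ uniformly in $\frr$. I would split the $\fa$-range at $\rN\fa = Q^{1/2}$: on the short range $\log(2Q/\rN\fa)\ge\tfrac12\log 2Q$, so the $(\log 2Q)^{-A}$ factor separates from the $\fa$-sum, which is controlled by $\sigma_{-\frac12}(\frr)$ exactly as in the Siegel analysis; on the long range $\rN\fa^{-1}\le Q^{-1/2}\rN\fa^{-1/2}$ extracts a $Q^{-1/2}$ factor that dominates any fixed negative power of $\log 2Q$, so this piece is absorbed into the short-range estimate. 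The same splitting simultaneously controls the main-term completion tail and the completion tail appearing in the Siegel contribution.
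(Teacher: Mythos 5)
Your strategy coincides with the paper's: the paper likewise writes $\mu_k(\fn)\mathbf{1}_{(\fn,\frr)=(1)}$ as the convolution of $\mu_k$ with the indicator of $\cD_\frr=\{\fd:\fp\mid\fd\Rightarrow\fp\mid\frr\}$ (your $\fa\mid\frr^\infty$), reduces the sum to $\sum_{\fd\in\cD_\frr}\rN\fd^{-1}\,T(Q/\rN\fd,(1))$, evaluates $T(\cdot,(1))$ by combining (\ref{7ing}) and (\ref{8ing}) with the same cancellation of the $Q^{\beta_0-1}\log Q$ terms, and then completes the three resulting sums over $\cD_\frr$, splitting at $Q^{1/2}$ to handle the logarithmic error. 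So the architecture is fine.

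The one step that does not hold as written is the inequality $\prod_{\fp\mid\frr}(1-\rN\fp^{-1/2})^{-1}\ll_{n_k}\sigma_{-\frac12}(\frr)$. Applying $(1-x)^{-1}=(1+x)(1-x^2)^{-1}$ with $x=\rN\fp^{-1/2}$ leaves the correction factor $\prod_{\fp\mid\frr}(1-\rN\fp^{-1})^{-1}=\rN\frr/\varphi(\frr)$; \emph{every} prime dividing $\frr$ contributes a factor greater than $1$ here, not only those with $\rN\fp\le 4$, and this product is unbounded as $\frr$ varies. What your argument actually yields is $\ll_{n_k}\sigma_{-\frac12}(\frr)^2$, which is too weak exactly where no spare power of $Q$ is available, namely for the asserted bound $|C_{\frr,\beta_0}|\le C_{\beta_0}^{'}\sigma_{-\frac12}(\frr)$ (which is then used in Lemma \ref{11ing} and in Section \ref{wylex}). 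The repair is routine and is essentially what the paper does: for the Siegel coefficient apply the factorisation at the exponent $\beta_0$ itself, so the correction is $\prod_{\fp\mid\frr}(1-\rN\fp^{-2\beta_0})^{-1}\le\zeta_k(2\beta_0)=O_k(1)$ because $2\beta_0>1$; and for the completion tails extract $\rN\fa^{-2/3}$ rather than $\rN\fa^{-1/2}$ (the paper uses $\rN\fd^{-1}\le Q^{-1/3}\rN\fd^{-2/3}$), since $(1-\rN\fp^{-2/3})^{-1}\le 1+\rN\fp^{-1/2}$ for all but the $O_{n_k}(1)$ prime ideals of bounded norm. (Also note that on your long range $\rN\fa>Q^{1/2}$ the extracted factor is $Q^{-1/4}$, not $Q^{-1/2}$; this is harmless.)
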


Recall the definition of $\kappa(\fn)$ in Section \ref{nots}.

\begin{lem}\label{11ing}
For any squarefree integral ideal $\frr$ in $\cO_k$ and any $A>0$,
\[ \sum_{\substack{\rN\fn \le Q \\ (\fn,\frr) = (1)}} \frac{\mu_k(\fn)}{\kappa(\fn)} \log(\frac{Q}{\rN\fn}) = \frac{\kappa(\frr)}{\rN\frr} \frac{\zeta_k(2)}{s_k} +C_{\frr,\beta_0}^{'}Q^{\beta_0-1}+ O_A(C_{\beta_0}^{'}\sigma_{-\frac12}(\frr) (\log 2Q)^{-A}), \]
where $C_{\frr,\beta_0}^{'}$ is a constant depending only on $\frr,\beta_0$, and $|C_{\frr,\beta_0}^{'}| \ll C_{\beta_0}^{'}\sigma_{-\frac12}(\frr)$.
\end{lem}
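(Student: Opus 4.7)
The plan is to derive Lemma \ref{11ing} from Lemma \ref{10ing} by a Dirichlet convolution that converts the $\kappa$-weight into an $\rN$-weight. Define the multiplicative function $g$, supported on squarefree ideals, by $g(\fp) = 1/(\rN\fp(\rN\fp+1))$ on prime arguments. A direct check at $\fn=\fp$, combined with multiplicativity on both sides, establishes the identity
\[ \frac{\mu_k(\fn)}{\kappa(\fn)} = \sum_{\fa\fb = \fn} g(\fa)\,\frac{\mu_k(\fb)}{\rN\fb} \]
for squarefree $\fn$. Substituting this into the left-hand side of Lemma \ref{11ing} and swapping the order of summation yields
\[ \sum_{\substack{\rN\fa \le Q \\ (\fa,\frr)=1}} g(\fa) \sum_{\substack{\rN\fb \le Q/\rN\fa \\ (\fb,\fa\frr)=1}} \frac{\mu_k(\fb)}{\rN\fb}\,\log\frac{Q/\rN\fa}{\rN\fb}. \]
We then apply Lemma \ref{10ing} to the inner sum with parameters $Q/\rN\fa$ and $\fa\frr$, producing a main term, a Siegel term, and a log-saving error.

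For the main term, coprimality and squarefreeness of $\fa,\frr$ give $\varphi(\fa\frr)=\varphi(\fa)\varphi(\frr)$ and $\rN(\fa\frr)=\rN\fa\rN\frr$, so after extending the $\fa$-sum to infinity (the tail $\rN\fa>Q$ contributes $O(Q^{-c})$ by absolute convergence) the total main contribution equals $\frac{\rN\frr}{s_k\varphi(\frr)}\sum_{(\fa,\frr)=1} g(\fa)\,\rN\fa/\varphi(\fa)$. The corresponding Euler product simplifies as
\[ \prod_{\fp\nmid\frr}\left(1 + \frac{1}{(\rN\fp+1)(\rN\fp-1)}\right) = \prod_{\fp\nmid\frr}\frac{1}{1-\rN\fp^{-2}} = \zeta_k(2)\cdot\frac{\varphi(\frr)\kappa(\frr)}{\rN\frr^2}, \]
using $1-\rN\fp^{-2} = (\varphi(\fp)/\rN\fp)(\kappa(\fp)/\rN\fp)$ at primes dividing $\frr$. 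Putting the pieces together produces exactly $\frac{\zeta_k(2)}{s_k}\cdot\frac{\kappa(\frr)}{\rN\frr}$, the stated main term.

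For the Siegel term, after factoring out $Q^{\beta_0-1}$ and extending the $\fa$-sum to infinity, we set
\[ C_{\frr,\beta_0}' := \sum_{(\fa,\frr)=1} g(\fa)\,C_{\fa\frr,\beta_0}\,\rN\fa^{1-\beta_0}, \]
which converges absolutely because $|g(\fp)|\,\sigma_{-1/2}(\fp)\,\rN\fp^{1-\beta_0} \ll \rN\fp^{-1-\beta_0}$ and $\beta_0 > 1/2$. Multiplicativity of $\sigma_{-1/2}$ on the coprime pair $(\fa,\frr)$ together with the bound $|C_{\fa\frr,\beta_0}|\le C_{\beta_0}'\sigma_{-1/2}(\fa\frr)$ from Lemma \ref{10ing} gives $|C_{\frr,\beta_0}'| \ll C_{\beta_0}'\sigma_{-1/2}(\frr)$, as required. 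The truncation error from $\rN\fa>Q$ is $O(C_{\beta_0}'\sigma_{-1/2}(\frr)\,Q^{\beta_0-1-c})$, absorbed into the stated log-saving error.

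The main obstacle is that Lemma \ref{10ing}'s error $(\log(2Q/\rN\fa))^{-A}$ degenerates as $\rN\fa$ approaches $Q$. We handle this by splitting the $\fa$-sum at $\sqrt Q$: for $\rN\fa \le \sqrt Q$ we have $\log(2Q/\rN\fa) \gg \log 2Q$, so combining Lemma \ref{10ing}'s error with the absolute convergence of $\sum_\fa |g(\fa)|\sigma_{-1/2}(\fa)$ gives a total contribution of $O_A(C_{\beta_0}'\sigma_{-1/2}(\frr)(\log 2Q)^{-A})$. For $\sqrt Q < \rN\fa \le Q$, we bound the inner sum trivially by $O((\log 2Q)^2)$ (via Lemma \ref{power} with $a=1$ summed against a second $\log$), while $\sum_{\rN\fa > \sqrt Q}|g(\fa)| \ll Q^{-1/2}$ follows from absolute convergence of $\sum_\fa |g(\fa)|\rN\fa^{1/2}$ (since $|g(\fp)|\rN\fp^{1/2} \ll \rN\fp^{-3/2}$). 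This contributes $O(Q^{-1/2}(\log 2Q)^2)$, safely absorbed.
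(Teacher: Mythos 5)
Your proof is correct and is essentially the paper's own argument: the convolution weight $g$ with $g(\fp)=1/(\rN\fp(\rN\fp+1))$ is exactly the paper's factor $\mu_k^2(\fd)/(\rN\fd\,\kappa(\fd))$ arising from the identity $\kappa(\fn)^{-1}=\rN\fn^{-1}\sum_{\fd\mid\fn}\mu_k(\fd)/\kappa(\fd)$, and the subsequent application of Lemma \ref{10ing}, the Euler-product evaluation of the main term, and the definition and bound of $C'_{\frr,\beta_0}$ all coincide (you are in fact slightly more explicit than the paper about the degenerating $\log(2Q/\rN\fa)$ near $\rN\fa=Q$). The only blemish is the claim $\sum_{\rN\fa>\sqrt Q}|g(\fa)|\ll Q^{-1/2}$, which should be $Q^{-1/4}$ from the stated convergence of $\sum_\fa|g(\fa)|\rN\fa^{1/2}$; this is still a power saving and changes nothing.
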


\begin{lem}\label{12ing}
For any squarefree integral ideal $\frr$ in $\cO_k$ and any $A>0$,
\[ \sum_{\substack{\rN\fn \le Q \\ (\fn,\frr) = (1)}} \frac{\mu_k(\fn)}{\kappa(\fn)} = O_{A}(C_{\frr,\beta_0}^{'}(1-\beta_0)Q^{\beta_0-1} +C_{\beta_0}^{'}\sigma_{-\frac12}(\frr) (\log 2Q)^{-A}), \]
where $C_{\frr,\beta_0}^{'}$ is as in Lemma \ref{11ing}.
\end{lem}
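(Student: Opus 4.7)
The plan is to exploit the near-cancellation of the main term in Lemma \ref{11ing} via a short finite-difference argument that isolates the unweighted sum. Write
\[ S(Q) := \sum_{\substack{\rN\fn \le Q \\ (\fn,\frr)=(1)}} \frac{\mu_k(\fn)}{\kappa(\fn)}, \qquad T(Q) := \sum_{\substack{\rN\fn \le Q \\ (\fn,\frr)=(1)}} \frac{\mu_k(\fn)}{\kappa(\fn)}\log\bigl(Q/\rN\fn\bigr), \]
so that Lemma \ref{11ing} controls $T(Q)$ and we want to bound $S(Q)$. For a parameter $h\in(0,1)$ to be chosen, I would compute $T(Q(1+h)) - T(Q)$ in two ways. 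Splitting the sum defining $T(Q(1+h))$ at $\rN\fn=Q$ and using $\log(Q(1+h)/\rN\fn) = \log(Q/\rN\fn) + \log(1+h)$ on the inner piece yields
\[ T(Q(1+h)) - T(Q) = \log(1+h)\,S(Q) + E(Q,h), \]
where $E(Q,h) = \sum_{Q<\rN\fn\le Q(1+h),\,(\fn,\frr)=(1)}\mu_k(\fn)\kappa(\fn)^{-1}\log(Q(1+h)/\rN\fn)$ is a boundary term. On the other hand, applying Lemma \ref{11ing} at $Q(1+h)$ and at $Q$ makes the constant main term $(\kappa(\frr)/\rN\frr)\zeta_k(2)/s_k$ cancel, leaving
\[ T(Q(1+h)) - T(Q) = C_{\frr,\beta_0}'\bigl[(Q(1+h))^{\beta_0-1}-Q^{\beta_0-1}\bigr] + O_A\bigl(C_{\beta_0}'\sigma_{-1/2}(\frr)(\log 2Q)^{-A}\bigr). \]

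Next, I would Taylor expand $(1+h)^{\beta_0-1}-1 = -(1-\beta_0)h(1+O(h))$, which is where the decisive $(1-\beta_0)$ factor originates. For the boundary term, using $\log(Q(1+h)/\rN\fn)\le\log(1+h)$ and $\kappa(\fn)^{-1}\le\rN\fn^{-1}$ together with Theorem \ref{counting} (which gives $\#\{\fn:Q<\rN\fn\le Q(1+h)\} = s_k Qh + O(Q^{1-1/n_k})$), one obtains
\[ |E(Q,h)| \le \log(1+h)\!\!\sum_{Q<\rN\fn\le Q(1+h)}\!\!\rN\fn^{-1} \ll h\bigl(h + Q^{-1/n_k}\bigr). \]
Equating the two expressions for $T(Q(1+h))-T(Q)$ and dividing by $\log(1+h)\asymp h$ gives
\[ S(Q) = -C_{\frr,\beta_0}'(1-\beta_0)Q^{\beta_0-1}\bigl(1+O(h)\bigr) + O\bigl(h+Q^{-1/n_k}\bigr) + O_A\!\left(\frac{C_{\beta_0}'\sigma_{-1/2}(\frr)(\log 2Q)^{-A}}{h}\right). \]

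Finally, to reach the bound of Lemma \ref{12ing} with arbitrary target exponent $A'$, I would take $h=(\log 2Q)^{-A'}$ and apply the above with the $A$ of Lemma \ref{11ing} equal to $2A'$: the error $O(h+Q^{-1/n_k})$ becomes $O((\log 2Q)^{-A'})$ (since $Q^{-1/n_k}$ is smaller than any negative power of $\log 2Q$), the last error becomes $O(C_{\beta_0}'\sigma_{-1/2}(\frr)(\log 2Q)^{-A'})$, and the $O(h)$ multiplier on the Siegel term is absorbed into the main $C_{\frr,\beta_0}'(1-\beta_0)Q^{\beta_0-1}$ contribution. The case where no Siegel zero exists is automatic, as the $C_{\frr,\beta_0}'$-term is then simply absent from Lemma \ref{11ing}. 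The main technical care lies in (i) extracting the factor $(1-\beta_0)$ cleanly via the first-order Taylor expansion of $(1+h)^{\beta_0-1}$, rather than a crude bound on the difference, and (ii) verifying that the $\frr$-dependence through $\sigma_{-1/2}(\frr)$ passes unchanged under differencing, which is immediate since that factor is $Q$-independent in Lemma \ref{11ing}.
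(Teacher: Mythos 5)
Your proposal is correct and is essentially identical to the paper's own proof: the paper likewise differences the log-weighted sum $T$ at $Q$ and $Qc$ with $c=1+(\log 2Q)^{-A}$, cancels the constant main terms from Lemma \ref{11ing}, extracts the factor $(1-\beta_0)$ from $c^{\beta_0-1}-1$ via the exponential expansion, bounds the boundary sum over $Q<\rN\fn\le Qc$ trivially, and divides by $\log c$. Your treatment of the boundary term via Theorem \ref{counting} is in fact slightly more careful than the paper's.
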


All these lemmas are proved in Section \ref{pflem}. We now assume the above lemmas and prove Theorem \ref{ig}.

\section{Proof of Theorem \ref{ig} in the case $wy \le x$}\label{wylex}

Recall the functions $\Lambda_1$ and $\Lambda_2$ defined in (\ref{lambda12}).
We write
\begin{eqnarray}\label{wysmall}
\nonumber \sum_{1 < \rN\fn \le x} \left( \sum_{\fd | \fn} \Lambda_1(\fd) \right) \left( \sum_{\fe | \fn} \Lambda_2(\fe) \right) &=& \sum_{1 < \rN\fn \le x} \sum_{[\fd,\fe]|\fn} \Lambda_1(\fd) \Lambda_2(\fe) \\
&=& \sum_{\substack{\fd,\fe \\ \rN([\fd,\fe]) \le x}} \Lambda_1(\fd) \Lambda_2(\fe) \sum_{\substack{\fn \\ [\fd,\fe]|\fn \\ 1<\rN\fn \le x}} 1.
\end{eqnarray}
So we can apply Theorem \ref{counting} and write the sum in (\ref{wysmall}) as
\begin{equation}\label{1me}
\sum_{\substack{\fd,\fe \\ \rN\fd \le w \\ \rN\fe \le y}} \Lambda_1(\fd) \Lambda_2(\fe) \frac{s_kx}{\rN([\fd,\fe])} + O(x^{1-\frac{1}{n_k}} \sum_{\substack{\fd,\fe \\ \rN\fd \le w \\ \rN\fe \le y}} \frac{\log(\frac{w}{\rN\fd}) \log(\frac{y}{\rN\fe})}{\rN([\fd,\fe])^{1-\frac{1}{n_k}}} ).
\end{equation}
To deal with the error term in (\ref{1me}), we need to use the Euler totient function for ideals $\varphi(\fn)$ defined in Section \ref{nots}. Similar to the standard Euler totient function, the function $\varphi(\fn)$ satisfies the identity $\rN\fn = \sum_{\frr | \fn} \varphi(\frr)$. 

Now we compute
\begin{eqnarray*}
 \sum_{\substack{\fd,\fe \\ \rN\fd \le w \\ \rN\fe \le y}} \frac{\log(\frac{w}{\rN\fd}) \log(\frac{y}{\rN\fe})}{\rN([\fd,\fe])^{1-\frac{1}{n_k}}} &=& \sum_{\substack{\fd,\fe \\ \rN\fd \le w \\ \rN\fe \le y}} \frac{\log(\frac{w}{\rN\fd}) \log(\frac{y}{\rN\fe})}{(\rN\fd\rN\fe)^{1-\frac{1}{n_k}}} \rN(\fd,\fe)^{1-\frac{1}{n_k}} \\
&=& \sum_{\substack{\fd,\fe \\ \rN\fd \le w \\ \rN\fe \le y}} \frac{\log(\frac{w}{\rN\fd}) \log(\frac{y}{\rN\fe})}{(\rN\fd\rN\fe)^{1-\frac{1}{n_k}}} (\sum_{\frr|(\fd,\fe)} \varphi(\frr))^{1-\frac{1}{n_k}} \\
& \le & \sum_{\substack{\fd,\fe \\ \rN\fd \le w \\ \rN\fe \le y}} \frac{\log(\frac{w}{\rN\fd}) \log(\frac{y}{\rN\fe})}{(\rN\fd\rN\fe)^{1-\frac{1}{n_k}}} (\sum_{\frr|(\fd,\fe)} \varphi(\frr)^{1-\frac{1}{n_k}}) \\ 
&=& \sum_{\substack{\frr \\ \rN\frr \le w}} \frac{\varphi(\frr)^{1-\frac{1}{n_k}}}{\rN\frr^{2-\frac{2}{n_k}}} \left( \sum_{\substack{\fm \\ \rN\fm \le \frac{w}{\rN\frr}}} \frac{\log(\frac{w}{\rN\fm\rN\frr})}{\rN\fm^{1-\frac{1}{n_k}}} \right) \left( \sum_{\substack{\fn \\ \rN\fn \le \frac{y}{\rN\frr}}} \frac{\log(\frac{y}{\rN\fn\rN\frr})}{\rN\fn^{1-\frac{1}{n_k}}} \right). 
\end{eqnarray*}
The inequality above is an application of H\"{o}lder's inequality.
By Lemma \ref{logpower} and Lemma \ref{power} (in the case $a=1$), the expression above is equal to
\begin{eqnarray}\label{firsterror}
\nonumber &=& \sum_{\substack{\frr \\ \rN\frr \le w}} \frac{\varphi(\frr)^{1-\frac{1}{n_k}}}{\rN\frr^{2-\frac{2}{n_k}}}
\left( s_kn_k^2 (\frac{w}{\rN\frr})^{\frac{1}{n_k}}+O((\log \frac{w}{\rN\frr})^2) \right) \left( s_kn_k^2 (\frac{y}{\rN\frr})^{\frac{1}{n_k}}+O((\log \frac{y}{\rN\frr})^2) \right) \\
\nonumber &=& s_k^2n_k^4 (wy)^{\frac{1}{n_k}} \sum_{\substack{\frr \\ \rN\frr \le w}} \frac{\varphi(\frr)^{1-\frac{1}{n_k}}}{\rN\frr^{2}}+O(y^{\frac{1}{n_k}}(\log w)^2 \sum_{\substack{\frr \\ \rN\frr \le w}} \frac{1}{\rN\frr}) \\
\nonumber &=& s_k^2n_k^4 (wy)^{\frac{1}{n_k}} \sum_{\substack{\frr \\ \rN\frr \le w}} \frac{\varphi(\frr)^{1-\frac{1}{n_k}}}{\rN\frr^{2}}+O(y^{\frac{1}{n_k}}(\log w)^3) \\
&=& O((wy)^{\frac{1}{n_k}}).
\end{eqnarray}

After multiplying by $x^{1-\frac{1}{n_k}}$, this shows that the error term in (\ref{1me}) is $O(x)$.

For the main term in (\ref{1me}), we compute
\begin{eqnarray*}
\sum_{\fd,\fe} \frac{\Lambda_1(\fd) \Lambda_2(\fe)}{\rN([\fd,\fe])} &=& \sum_{\fd,\fe} \frac{\Lambda_1(\fd) \Lambda_2(\fe)}{\rN(\fd\fe)} \rN((\fd,\fe)).
\end{eqnarray*}
Thus,
\begin{eqnarray*}\label{0107}
 & &  \sum_{\fd,\fe} \frac{\Lambda_1(\fd) \Lambda_2(\fe)}{\rN([\fd,\fe])} \\
\nonumber &=& \sum_{\fd,\fe} \frac{\Lambda_1(\fd) \Lambda_2(\fe)}{\rN(\fd\fe)} \sum_{\frr | (\fd,\fe)} \varphi(\frr) \\
\nonumber &=& \sum_{\rN\frr \le w} \frac{\mu_k^2(\frr) \varphi(\frr)}{\rN\frr^2} \left( \sum_{\substack{\rN\fm \le \frac{w}{\rN\frr} \\ (\fm,\frr) = (1)}} \frac{\mu_k(\fm)}{\rN\fm} \log(\frac{w}{\rN\fm\rN\frr}) \right) \left( \sum_{\substack{\rN\fn \le \frac{y}{\rN\frr} \\ (\fn,\frr) = (1)}} \frac{\mu_k(\fn)}{\rN\fn} \log(\frac{y}{\rN\fn\rN\frr}) \right).
\end{eqnarray*}

We expand the expression. To start, by Lemma \ref{10ing},
\begin{eqnarray}\label{seconderror}
\nonumber & & \sum_{\fd,\fe} \frac{\Lambda_1(\fd) \Lambda_2(\fe)}{\rN([\fd,\fe])}  \\
\nonumber &=& \sum_{\rN\frr \le w} \frac{\mu_k^2(\frr) \varphi(\frr)}{\rN\frr^2} \left( \frac{\rN\frr}{s_k\varphi(\frr)} + C_{\frr,\beta_0} (\frac{w}{\rN\frr})^{\beta_0-1}+ O(C_{\beta_0}^{'}\sigma_{-1/2}(\frr)(\log \frac{2w}{\rN\frr})^{-2}) \right)^2 \\
\nonumber &=& \frac{1}{s_k^2} \sum_{\rN\frr \le w} \frac{\mu_k^2(\frr)}{\varphi(\frr)} + w^{2\beta_0-2}\sum_{\rN\frr \le w} C_{\frr,\beta_0}^2 \frac{\mu_k^2(\frr)\varphi(\frr)}{\rN\frr^{2\beta_0}} \\
\nonumber & & + O\left( C_{\beta_0}^{'2}\sum_{\rN\frr \le w} \frac{\mu_k^2(\frr) \varphi(\frr)}{\rN\frr^2} \sigma_{-1/2}^2(\frr)(\log \frac{2w}{\rN\frr})^{-4} \right) \\
\nonumber & & +\frac{w^{\beta_0-1}}{s_k} \sum_{\rN\frr \le w} \frac{\mu_k^2(\frr)}{\rN\frr^{\beta_0}}C_{\frr,\beta_0}+ O(C_{\beta_0}^{'}\sum_{\rN\frr \le w} \frac{\mu_k^2(\frr)\sigma_{-1/2}(\frr)}{\rN\frr} (\log \frac{2w}{\rN\frr})^{-2}) \\
 & & + w^{\beta_0-1}O(C_{\beta_0}^{'}\sum_{\rN\frr \le w} \frac{\mu_k^2(\frr)\varphi(\frr)}{\rN\frr^{1+\beta_0}}|C_{\frr,\beta_0}| \sigma_{-1/2}(\frr) (\log \frac{2w}{\rN\frr})^{-2}. 
\end{eqnarray}
The term involving ${\beta_0}$ exists if and only if the Siegel zero exists (and similarly for the proofs below).

Among the six terms in (\ref{seconderror}), we apply Lemma \ref{muandphi} to the first term to obtain
\begin{equation}\label{secondmain}
\frac{1}{s_k^2} \sum_{\rN\frr \le w} \frac{\mu_k^2(\frr)}{\varphi(\frr)} = \frac{1}{s_k} \log w+O(1).
\end{equation} 
For the second term, we have
\begin{eqnarray}\label{09042}
\nonumber \left|w^{2\beta_0-2}\sum_{\rN\frr \le w} C_{\frr,\beta_0}^2 \frac{\mu_k^2(\frr)\varphi(\frr)}{\rN\frr^{2\beta_0}}\right| & \le &  w^{2\beta_0-2} \sum_{\rN\frr \le w} \frac{C_{\frr,\beta_0}^2}{\rN\frr^{2\beta_0-1}} \le w^{2\beta_0-2}C_{\beta_0}^{'2} \sum_{\rN\frr \le w} \frac{\sigma_{-\frac12}^2(\frr)}{\rN\frr^{2\beta_0-1}} \\
& \ll & \frac{C_{\beta_0}^{'2}}{1-\beta_0},
\end{eqnarray}
where $C_{\beta_0}^{'}$ is defined before Lemma \ref{10ing} and the last step uses partial summation and Lemma \ref{01073}.

We apply Lemma \ref{loginv} to the third, fifth, and sixth term in (\ref{seconderror}). For the fourth term, we apply partial summation and Lemma \ref{01073}. Then we can see that the third and fifth terms are $O(C_{\beta_0}^{'2})$, and the fourth and sixth terms are $O(\frac{C_{\beta_0}^{'2}}{1-\beta_0})$. 
Therefore,
\begin{eqnarray}\label{09048}
\nonumber \sum_{\fd,\fe} \frac{\Lambda_1(\fd) \Lambda_2(\fe)}{\rN([\fd,\fe])} &=& \frac{1}{s_k}\log w+O(\frac{C_{\beta_0}^{'2}}{1-\beta_0}) \\
&=& \frac{1}{s_k}\log w+O(\max\{\frac{1}{1-\beta_0},\frac{1}{(1-\beta_0)^5|\zeta_k^{'}(\beta_0)|^2}\}).
\end{eqnarray}

 
Combining (\ref{firsterror}) and (\ref{09048}) in (\ref{1me}), we obtain Theorem \ref{ig} in the case $wy \le x$.

\section{Proof of Theorem \ref{ig} in the case $wy > x$}\label{wygx}

Recall the definition of the von Mangoldt function for ideals in Section \ref{nots}. We have the following identity. For an ideal $\fn$, 
\begin{equation}\label{smalltrick}
 \sum_{\fd | \fn} \mu_k(\fd) \log(\frac{z}{\rN\fd}) = \begin{cases}
\Lambda(\fn) & \text{ if } \fn \neq (1), \\ 
\log z & \text{ if } \fn = (1).
\end{cases} 
\end{equation}
Using the equation above, we have for $\fn \neq (1)$,
\begin{equation*}
\sum_{\fd | \fn} \Lambda_1(\fd) = \Lambda(\fn) - \sum_{\substack{\fd | \fn \\ \rN\fd > w}} \mu_k(\fd) \log(\frac{w}{\rN\fd}) = \Lambda(\fn) + \sum_{\substack{\fd | \fn \\ \rN\fd < \frac{\rN\fn}{w}}} \mu_k(\frac{\fn}{\fd}) \log(\frac{\rN\fn}{\rN\fd w}),
\end{equation*}
\begin{equation*}
\sum_{\fe | \fn} \Lambda_2(\fe) = \Lambda(\fn) - \sum_{\substack{\fe | \fn \\ \rN\fe > y}} \mu_k(\fe) \log(\frac{y}{\rN\fe}) = \Lambda(\fn) + \sum_{\substack{\fe | \fn \\ \rN\fe < \frac{\rN\fn}{y}}} \mu_k(\frac{\fn}{\fe}) \log(\frac{\rN\fn}{\rN\fe y}). 
\end{equation*}

Now we compute the left-hand side of Theorem \ref{ig}.  We separate the terms $\rN\fn = 1$ and $1<\rN\fn \le x$ and obtain
\begin{eqnarray*}
& & \sum_{\rN\fn \le x}  \left( \sum_{\fd | \fn} \Lambda_1(\fd) \right) \left( \sum_{\fe | \fn} \Lambda_2(\fe) \right) \\ &=& \log w\log y \\
& & +\sum_{1<\rN\fn \le x}   \left( \Lambda(\fn) + \sum_{\substack{\fd | \fn \\ \rN\fd < \frac{\rN\fn}{w}}} \mu_k(\frac{\fn}{\fd}) \log(\frac{\rN\fn}{\rN\fd w}) \right) \left( \Lambda(\fn) + \sum_{\substack{\fe | \fn \\ \rN\fe < \frac{\rN\fn}{y}}} \mu_k(\frac{\fn}{\fe}) \log(\frac{\rN\fn}{\rN\fe y}) \right) \\
&=& \log w\log y + \sum_{1 < \rN\fn \le x}  \Lambda(\fn)^2 \\
& & + \sum_{1 < \rN\fn \le x}  \Lambda(\fn) \left[ \sum_{\substack{\fd | \fn \\ \rN\fd < \frac{\rN\fn}{w}}} \mu_k(\frac{\fn}{\fd}) \log(\frac{\rN\fn}{\rN\fd w}) + \sum_{\substack{\fe | \fn \\ \rN\fe < \frac{\rN\fn}{y}}} \mu_k(\frac{\fn}{\fe}) \log(\frac{\rN\fn}{\rN\fe y}) \right] \\
& & + \sum_{1 < \rN\fn \le x}  \left( \sum_{\substack{\fd | \fn \\ \rN\fd < \frac{\rN\fn}{w}}} \mu_k(\frac{\fn}{\fd}) \log(\frac{\rN\fn}{\rN\fd w}) \right) \left( \sum_{\substack{\fe | \fn \\ \rN\fe < \frac{\rN\fn}{y}}} \mu_k(\frac{\fn}{\fe}) \log(\frac{\rN\fn}{\rN\fe y}) \right) \\
& =: & \log w\log y+S_1 + S_2 + S_3.
\end{eqnarray*}

First, we discuss $S_1$.
From the proof of the effective Chebotarev density theorem, or see e.g., \cite[Lemma 12]{Mit68}, for any $A \ge 2$,
\[ \sum_{\rN\fp \le Q} \log \rN\fp = Q + O_A(Q(\log 2Q)^{-A}). \]
Then by partial summation,
\begin{eqnarray*}
 S_1=\sum_{1 < \rN\fn \le x}  \Lambda(\fn)^2 &=& \sum_{\rN\fp \le x} (\log \rN\fp)^2 + O(x) \\
 &=& \log x\sum_{\rN\fp \le x} \log \rN\fp - \sum_{\rN\fp \le x} \log \rN\fp (\log x - \log \rN\fp) \\
 &=& x\log x + O(x) + \int_1^x \frac{1}{t} \sum_{\rN\fp \le t} \log \rN\fp dt \\
 &=& x\log x + O(x).
 \end{eqnarray*}
 
 Next, we consider $S_2$. We write $\fn = \fp^m$. Then
 \[ \sum_{\substack{\fd | \fn \\ \rN\fd < \frac{\rN\fn}{w}}} \mu_k(\frac{\fn}{\fd}) \log(\frac{\rN\fn}{\rN\fd w}) = \begin{cases} -\log (\frac{\rN\fp}{w}) & \text{ if } w<\rN\fp, \\ 0 & \text{ otherwise.} \end{cases} \]
 Thus,
\[  S_2 = -\sum_{w < \rN\fp \le x} (\log \rN\fp) (\log \frac{\rN\fp}{w}) -\sum_{y < \rN\fp \le x} (\log \rN\fp) (\log \frac{\rN\fp}{y}) + O(x). \]
Using partial summation, we have
\[ \sum_{w < \rN\fp \le x} (\log \rN\fp) (\log \frac{\rN\fp}{w}) = x\log(\frac{x}{w}) + O(x) \]
and  
\[ \sum_{y < \rN\fp \le x} (\log \rN\fp) (\log \frac{\rN\fp}{y}) = x\log(\frac{x}{y}) + O(x). \]
Therefore, 
\[ S_2 = x\log(\frac{wy}{x^2}) + O(x). \]

For $S_3$, we let $(\fd,\fe) = \fq, \fd = \fq \frr, \fe = \fq \ft$ (so $(\frr,\ft)=(1)$), and let $\fn = \fm \fq \frr \ft$. Then
\begin{eqnarray}\label{s3}
 S_3 &=& \sum_{\substack{\fd \\ \rN\fd \le \frac{x}{w}}}  \sum_{\substack{\fe \\ \rN\fe \le \frac{x}{y}}} \sum_{\substack{\fn \\ \rN\fd w < \rN\fn \le x \\ \rN\fe y < \rN\fn \le x \\ [\fd,\fe] | \fn}}  \mu_k(\frac{\fn}{\fd}) \mu_k(\frac{\fn}{\fe}) \log(\frac{\rN\fn}{\rN\fd w}) \log(\frac{\rN\fn}{\rN\fe y}) \\
\nonumber &=& \sum_{\substack{\fq \\ \rN\fq \le \frac{x}{y}}} \sum_{\substack{\ft \\ \rN\ft \le \frac{x}{\rN\fq y} }} \mu_k(\ft) \sum_{\substack{\frr \\ \rN\frr \le \frac{x}{\rN\fq w} \\ (\frr,\ft)=(1)}} \mu_k(\frr) \sum_{\substack{\fm \\ \rN\fm \le \frac{x}{\rN\fq\rN\frr\rN\ft} \\ \rN\fm > \frac{w}{\rN\ft}, \rN\fm > \frac{y}{\rN\frr} \\ (\fm,\frr\ft)=(1)}}  \mu_k^2(\fm)  \log(\frac{\rN\fm\rN\ft}{w}) \log(\frac{\rN\fm\rN\frr}{y}).
\end{eqnarray}
For the innermost sum, we rewrite the summation as
\[  \sum_{\substack{\fm \\ \rN\fm \le \frac{x}{\rN\fq\rN\frr\rN\ft} \\ \rN\fm > \frac{w}{\rN\ft}, \rN\fm > \frac{y}{\rN\frr} \\ (\fm,\frr\ft)=(1)}} = \sum_{\substack{\fm \\ \rN\fm \le \frac{x}{\rN\fq\rN\frr\rN\ft} \\ (\fm,\frr\ft)=(1)}} - \sum_{\substack{\fm \\ \rN\fm \le \max\{ \frac{w}{\rN\ft}, \frac{y}{\rN\frr}\} \\ (\fm,\frr\ft)=(1)}} \]
so we need to compute, for $M > 1$, the sum
\[ \sum_{\substack{\fm \\ \rN\fm \le M \\ (\fm,\frr\ft)=(1)}} \mu_k^2(\fm)  \log(\frac{\rN\fm\rN\ft}{w}) \log(\frac{\rN\fm\rN\frr}{y}). \]

We apply partial summation twice and apply Lemma \ref{0108}. We see that the sum is equal to
\begin{eqnarray*}
& &\frac{s_k}{\zeta_k(2)} \frac{\rN\frr \rN\ft}{\kappa(\frr) \kappa(\ft)} M \left( (\log(\frac{M\rN\ft}{w})-1)(\log(\frac{M\rN\frr}{y})-1)+1  \right) \\
& + &  O\left(\log(\frac{2M\rN\ft}{w})\log(\frac{2M\rN\frr}{y})M^{1-\frac{1}{n_k}+\ep_k} \sigma_{-\frac{1}{n_k+1}}(\frr \ft)\right).
\end{eqnarray*}
Thus, the innermost sum in (\ref{s3}) is equal to
\begin{eqnarray*}
& & \frac{s_k}{\zeta_k(2)} \frac{x}{\rN\fq \kappa(\frr) \kappa(\ft)}  \left( (\log(\frac{x}{\rN\fq \rN\frr w})-1)(\log(\frac{x}{\rN\fq \rN\ft y})-1)+1  \right) \\
& + & \frac{s_k}{\zeta_k(2)} \frac{\max \{ \frac{w}{\rN\ft}, \frac{y}{\rN\frr} \} \rN\frr \rN\ft}{\kappa(\frr) \kappa(\ft)} (\log(\frac{\max \{ \frac{w}{\rN\ft}, \frac{y}{\rN\frr} \} }{\min \{ \frac{w}{\rN\ft}, \frac{y}{\rN\frr} \}}) - 2) \\
& + &  O\left(\log(\frac{2x}{\rN\fq \rN\frr w})\log(\frac{2x}{\rN\fq \rN\ft y})(\frac{x}{\rN\fq \rN\frr \rN\ft})^{1-\frac{1}{n_k}+\ep_k} \sigma_{-\frac{1}{n_k+1}}(\frr \ft)\right).
\end{eqnarray*}

Plugging this into (\ref{s3}), we have
\begin{eqnarray*}
S_3 &=& \frac{s_k}{\zeta_k(2)} x \sum_{\substack{\fq \\ \rN\fq \le \frac{x}{y}}} \frac{1}{\rN\fq} \sum_{\substack{\ft \\ \rN\ft \le \frac{x}{\rN\fq y} }} \frac{\mu_k(\ft)}{\kappa(\ft)} \sum_{\substack{\frr \\ \rN\frr \le \frac{x}{\rN\fq w} \\ (\frr,\ft)=(1)}} \frac{\mu_k(\frr)}{\kappa(\frr)} \left( (\log(\frac{x}{\rN\fq \rN\frr w})-1)(\log(\frac{x}{\rN\fq \rN\ft y})-1)+1  \right) \\
 & & + \frac{s_k}{\zeta_k(2)} \sum_{\substack{\fq \\ \rN\fq \le \frac{x}{y}}} \sum_{\substack{\frr \\ \rN\frr \le \frac{x}{\rN\fq w} }} \frac{\mu_k(\frr) \rN\frr}{\kappa(\frr)} \sum_{\substack{\ft \\ \rN\ft \le \frac{w\rN\frr}{y} \\ (\ft,\frr)=(1)}} \frac{w}{\rN\ft} \cdot \frac{\mu_k(\ft) \rN\ft}{\kappa(\ft)} \left(\log(\frac{w\rN\frr}{y\rN\ft})-2 \right) \\
 & & + \frac{s_k}{\zeta_k(2)} \sum_{\substack{\fq \\ \rN\fq \le \frac{x}{y}}} \sum_{\substack{\ft \\ \rN\ft \le \frac{x}{\rN\fq y} }} \frac{\mu_k(\ft) \rN\ft}{\kappa(\ft)} \sum_{\substack{\frr \\ \rN\frr \le \frac{y\rN\ft}{w} \\ (\frr,\ft)=(1)}} \frac{y}{\rN\frr} \cdot \frac{\mu_k(\frr) \rN\frr}{\kappa(\frr)} \left(\log(\frac{y\rN\ft}{w\rN\frr})-2 \right) \\
 & & + O\left( \sum_{\substack{\fq \\ \rN\fq \le \frac{x}{y}}} \sum_{\substack{\ft \\ \rN\ft \le \frac{x}{\rN\fq y} }} \sum_{\substack{\frr \\ \rN\frr \le \frac{x}{\rN\fq w} \\ (\frr,\ft)=(1)}}  \log(\frac{2x}{\rN\fq \rN\frr w})\log(\frac{2x}{\rN\fq \rN\ft y})(\frac{x}{\rN\fq \rN\frr \rN\ft})^{1-\frac{1}{n_k}+\ep_k} \sigma_{-\frac{1}{n_k+1}}(\frr \ft)\right) \\
 & =: & \Sigma_A+\Sigma_B+\Sigma_C+O(\Sigma_D).
\end{eqnarray*}

First, we consider $\Sigma_A$ as follows. By Lemmas \ref{11ing} and \ref{12ing}, we have
\begin{eqnarray*}
 & & \sum_{\substack{\frr \\ \rN\frr \le \frac{x}{\rN\fq w} \\ (\frr,\ft)=(1)}} \frac{\mu_k(\frr)}{\kappa(\frr)} \left( (\log(\frac{x}{\rN\fq \rN\frr w})-1)(\log(\frac{x}{\rN\fq \rN\ft y})-1)+1  \right) \\
 &=& (\log \frac{x}{\rN\fq \rN\ft y} - 1) \left[ \frac{\kappa(\ft)}{\rN\ft} \frac{\zeta_k(2)}{s_k}+C_{\ft,\beta_0}^{'}(\frac{x}{\rN\fq w})^{\beta_0-1}+O(C_{\beta_0}^{'}\sigma_{-\frac12}(\ft)(\log \frac{2x}{\rN\fq w})^{-4}) \right] \\
 & &-(\log \frac{x}{\rN\fq \rN\ft y} - 1) \left[O(C_{\ft,\beta_0}^{'}(1-\beta_0)(\frac{x}{\rN\fq w})^{\beta_0-1}+C_{\beta_0}^{'}\sigma_{-\frac12}(\ft)(\log \frac{2x}{\rN\fq w})^{-4}) \right] \\
 & & +O(C_{\ft,\beta_0}^{'}(1-\beta_0)(\frac{x}{\rN\fq w})^{\beta_0-1}+C_{\beta_0}^{'}\sigma_{-\frac12}(\ft)(\log \frac{2x}{\rN\fq w})^{-4}) \\
 &=& (\log \frac{x}{\rN\fq \rN\ft y} - 1) \left[ \frac{\kappa(\ft)}{\rN\ft} \frac{\zeta_k(2)}{s_k}+O(C_{\beta_0}^{'}\sigma_{-\frac12}(\ft)(\frac{x}{\rN\fq w})^{\beta_0-1})+O(C_{\beta_0}^{'}\sigma_{-\frac12}(\ft)(\log \frac{2x}{\rN\fq w})^{-4}) \right].
\end{eqnarray*}
Plugging this into $\Sigma_A$, we get
\begin{eqnarray}\label{sigmaa}
\nonumber \Sigma_A &=& \frac{s_k}{\zeta_k(2)}x \sum_{\substack{\fq \\ \rN\fq \le \frac{x}{y}}} \frac{1}{\rN\fq} \sum_{\substack{\ft \\ \rN\ft \le \frac{x}{\rN\fq y} }} \frac{\mu_k(\ft)}{\kappa(\ft)} \left( \frac{\kappa(\ft)}{\rN\ft} \frac{\zeta_k(2)}{s_k}\log \frac{x}{\rN\fq \rN\ft y} - \frac{\kappa(\ft)}{\rN\ft} \frac{\zeta_k(2)}{s_k} \right) \\
\nonumber & & +\frac{s_k}{\zeta_k(2)}x \sum_{\substack{\fq \\ \rN\fq \le \frac{x}{y}}} \frac{1}{\rN\fq} (\frac{x}{\rN\fq w})^{\beta_0-1} O\left( C_{\beta_0}^{'}  \sum_{\substack{\ft \\ \rN\ft \le \frac{x}{\rN\fq y} }} \sigma_{-\frac12}(\ft) \frac{|\log \frac{x}{\rN\fq \rN\ft y}-1|}{\rN\ft} \right) \\
\nonumber & & +\frac{s_k}{\zeta_k(2)}x \sum_{\substack{\fq \\ \rN\fq \le \frac{x}{y}}} \frac{1}{\rN\fq} (\log \frac{2x}{\rN\fq w})^{-4} O\left(C_{\beta_0}^{'}\sum_{\substack{\ft \\ \rN\ft \le \frac{x}{\rN\fq y} }} \frac{|\log \frac{x}{\rN\fq \rN\ft y}-1|}{\rN\ft} \sigma_{-\frac12}(\ft)  \right) \\
 &=& x\sum_{\substack{\fq \\ \rN\fq \le \frac{x}{y}}} \frac{1}{\rN\fq} \sum_{\substack{\ft \\ \rN\ft \le \frac{x}{\rN\fq y} }} \frac{\mu_k(\ft)}{\rN\ft} (\log \frac{x}{\rN\fq \rN\ft y}-1) \\
\nonumber & & + O\left(C_{\beta_0}^{'} x\sum_{\substack{\fq \\ \rN\fq \le \frac{x}{y}}} \frac{1}{\rN\fq} (\log \frac{x}{\rN\fq y}) (\frac{x}{\rN\fq w})^{\beta_0-1} \sum_{\substack{\ft \\ \rN\ft \le \frac{x}{\rN\fq y} }} \frac{1}{\rN\ft} \sigma_{-\frac12}(\ft)\right)  \\
\nonumber & & + O\left(C_{\beta_0}^{'}x\sum_{\substack{\fq \\ \rN\fq \le \frac{x}{y}}} \frac{1}{\rN\fq} (\log \frac{2x}{\rN\fq w})^{-3} \sum_{\substack{\ft \\ \rN\ft \le \frac{x}{\rN\fq y} }} \frac{1}{\rN\ft} \sigma_{-\frac12}(\ft)  \right).
\end{eqnarray}


Applying (\ref{7ing}) and (\ref{8ing}) to the inner sum of the first term in (\ref{sigmaa}), we have
\begin{eqnarray*}
 & & \sum_{\substack{\ft \\ \rN\ft \le \frac{x}{\rN\fq y} }} \frac{\mu_k(\ft)}{\rN\ft} (\log \frac{x}{\rN\fq \rN\ft y} - 1) \\
 &=& (\log \frac{x}{\rN\fq y}-1) \left[ \frac{(\frac{x}{\rN\fq y})^{\beta_0-1}}{(\beta_0-1)\zeta_k^{'}(\beta_0)}+O((\log \frac{2x}{\rN\fq y})^{-3}) \right] \\
 & & - \left[ -\frac{1}{s_k}+\frac{(\frac{x}{\rN\fq y})^{\beta_0-1}((\beta_0-1)\log \frac{x}{\rN\fq y}-1)}{(\beta_0-1)^2\zeta_k^{'}(\beta_0)}+O((\log \frac{2x}{\rN\fq y})^{-2})\right] \\
&=& \frac{1}{s_k}+\frac{(\frac{x}{\rN\fq y})^{\beta_0-1}}{(\beta_0-1)^2\zeta_k^{'}(\beta_0)}(2-\beta_0)+O((\log \frac{2x}{\rN\fq y})^{-2}).
\end{eqnarray*}
Thus, by Lemma \ref{power} (in the cases $a=1$ and $a=\beta_0$) and Lemma \ref{loginv}, the first term in (\ref{sigmaa}) satisfies
\[ x\sum_{\substack{\fq \\ \rN\fq \le \frac{x}{y}}} \frac{1}{\rN\fq} \sum_{\substack{\ft \\ \rN\ft \le \frac{x}{\rN\fq y} }} \frac{\mu_k(\ft)}{\rN\ft} (\log(\frac{x}{\rN\ft \rN\fq y}) - 1) = x\log \frac{x}{y}+O(\frac{1}{(1-\beta_0)^3|\zeta_k^{'}(\beta_0)|}x). \]

Next, we consider the second term in (\ref{sigmaa}). Note that for any positive integer $A$ and positive number $a$, we have 
\begin{equation}\label{beta0small}(\beta_0-1)a+A\log a \le  A\log \frac{A}{1-\beta_0}.
\end{equation}
Then for any $\fq$, we apply (\ref{beta0small}) with $A=4, a=\log \frac{x}{\rN\fq w}$ and obtain $(\frac{x}{\rN\fq w})^{\beta_0-1} \ll \frac{1}{(1-\beta_0)^4} (\log \frac{x}{\rN\fq w})^{-4} \ll \frac{1}{(1-\beta_0)^4} (\log \frac{x}{\rN\fq y})^{-4}$. Then we can consider the second term and the third term in (\ref{sigmaa}) together as
\[ O\left(\frac{C_{\beta_0}^{'}}{(1-\beta_0)^4}x\sum_{\substack{\fq \\ \rN\fq \le \frac{x}{y}}} \frac{1}{\rN\fq} (\log \frac{2x}{\rN\fq w})^{-3} \sum_{\substack{\ft \\ \rN\ft \le \frac{x}{\rN\fq y} }} \frac{1}{\rN\ft} \sigma_{-\frac12}(\ft)  \right). \]
Applying partial summation and Lemma \ref{01073}  on the innermost sum of this term, we have
\[ \sum_{\substack{\ft \\ \rN\ft \le \frac{x}{\rN\fq y} }} \frac{1}{\rN\ft} \sigma_{-\frac12}(\ft) \ll \log \frac{2x}{\rN\fq y} \le \log \frac{2x}{\rN\fq w}. \]
Thus, by Lemma \ref{loginv}, the second and the third terms in (\ref{sigmaa}) are $O(\frac{C_{\beta_0}^{'}}{(1-\beta_0)^4}x)$.

Combining all the estimates for the terms in (\ref{sigmaa}), we have 
\[ \Sigma_A=x\log \frac{x}{y}+O\left((\frac{1}{(1-\beta_0)^3|\zeta_k^{'}(\beta_0)|}+\frac{C_{\beta_0}^{'}}{(1-\beta_0)^4})x\right). \]

Second, we consider $\Sigma_B$ and $\Sigma_C$. Our approach is essentially the same as that in estimating $\Sigma_A$. 
By Lemmas \ref{11ing} and \ref{12ing}, we have
\begin{eqnarray*}
& & \sum_{\substack{\ft \\ \rN\ft \le \frac{w\rN\frr}{y} \\ (\ft,\frr)=(1)}} \frac{\mu_k(\ft)}{\kappa(\ft)} (\log \frac{w\rN\frr}{y\rN\ft}-2) \\
 &=& \left( \frac{\kappa(\frr)}{\rN\frr} \frac{\zeta_k(2)}{s_k} + C_{\frr,\beta_0}^{'}(\frac{w\rN\frr}{y})^{\beta_0-1} + O(C_{\beta_0}^{'}\sigma_{-\frac12}(\frr)(\log \frac{2w\rN\frr}{y})^{-2}) \right) \\
& & +O\left( |C_{\frr,\beta_0}^{'}|(1-\beta_0)(\frac{w\rN\frr}{y})^{\beta_0-1}+C_{\beta_0}^{'}\sigma_{-\frac12}(\frr)(\log \frac{2w\rN\frr}{y})^{-2} \right).
\end{eqnarray*}
Plugging this into $\Sigma_B$, we get
\begin{eqnarray}\label{sigmab}
\nonumber \Sigma_B &=& w\sum_{\substack{\fq \\ \rN\fq \le \frac{x}{y}}} \sum_{\substack{\frr \\ \frac{y}{w} \le \rN\frr \le \frac{x}{\rN\fq w} }} \mu_k(\frr) + O\left( C_{\beta_0}^{'} w \sum_{\substack{\fq \\ \rN\fq \le \frac{x}{y}}} \sum_{\substack{\frr \\ \frac{y}{w} \le \rN\frr \le \frac{x}{\rN\fq w} }} (\frac{w\rN\frr}{y})^{\beta_0-1}\sigma_{-\frac12}(\frr) \right)  \\
& & +O\left(C_{\beta_0}^{'}w \sum_{\substack{\fq \\ \rN\fq \le \frac{x}{y}}} \sum_{\substack{\frr \\ \frac{y}{w} \le \rN\frr \le \frac{x}{\rN\fq w} }}  \sigma_{-\frac12}(\frr) (\log \frac{2w\rN\frr}{y})^{-2} \right).
\end{eqnarray}
By (\ref{6ing}), Lemma \ref{power}, and Lemma \ref{loginv}, the first term in (\ref{sigmab}) is bounded by
\begin{eqnarray*}
&  \ll & w\sum_{\substack{\fq \\ \rN\fq \le \frac{x}{y}}} \frac{(\frac{x}{\rN\fq w})^{\beta_0}}{\beta_0|\zeta_k^{'}(\beta_0)|} + w \sum_{\substack{\fq \\ \rN\fq \le \frac{x}{y}}} \frac{x}{\rN\fq w} (\log \frac{2x}{\rN\fq w})^{-2} \\
& \ll &  \frac{w(\frac{x}{w})^{\beta_0}(\frac{x}{y})^{1-\beta_0}}{(1-\beta_0)|\zeta_k^{'}(\beta_0)|} + x \sum_{\substack{\fq \\ \rN\fq \le \frac{x}{y}}} \frac{1}{\rN\fq} (\log \frac{2x}{\rN\fq y})^{-2} \ll \frac{x}{(1-\beta_0)|\zeta_k^{'}(\beta_0)|}. 
\end{eqnarray*}
By (\ref{beta0small}) with $A=2, a=\log \frac{2w\rN\frr}{y}$, we have $(\frac{w\rN\frr}{y})^{\beta_0-1} \ll \frac{1}{(1-\beta_0)^2} (\log \frac{2w\rN\frr}{y})^{-2}$. Thus, the two error terms in (\ref{sigmab}) can be combined as
\[ O\left(\frac{C_{\beta_0}^{'}}{(1-\beta_0)^2}w \sum_{\substack{\fq \\ \rN\fq \le \frac{x}{y}}} \sum_{\substack{\frr \\ \frac{y}{w} \le \rN\frr \le \frac{x}{\rN\fq w} }}  \sigma_{-\frac12}(\frr) (\log \frac{2w\rN\frr}{y})^{-2} \right). \]
By partial summation, Lemma \ref{01073}, and Lemma \ref{loginv}, this term is bounded by
\[ \ll \frac{C_{\beta_0}^{'}}{(1-\beta_0)^2}w \sum_{\substack{\fq \\ \rN\fq \le \frac{x}{y}}} \frac{x}{\rN\fq w} (\log \frac{2x}{\rN\fq y})^{-2} \ll \frac{C_{\beta_0}^{'}}{(1-\beta_0)^2}x \sum_{\substack{\fq \\ \rN\fq \le \frac{x}{y}}} \frac{1}{\rN\fq} (\log \frac{2x}{\rN\fq y})^{-2} \ll \frac{C_{\beta_0}^{'}}{(1-\beta_0)^2}x. \]

 Thus, 
 \[ \Sigma_B = O\left((\frac{1}{(1-\beta_0)|\zeta_k^{'}(\beta_0)|}+\frac{C_{\beta_0}^{'}}{(1-\beta_0)^2})x\right). \] Similarly, $\Sigma_C = O\left((\frac{1}{(1-\beta_0)|\zeta_k^{'}(\beta_0)|}+\frac{C_{\beta_0}^{'}}{(1-\beta_0)^2})x\right)$. 
 
 Third, we consider $\Sigma_D$. We write
 \begin{eqnarray*}
 \Sigma_D &=& x^{1-\frac{1}{n_k}+\ep_k} \sum_{\substack{\fq \\ \rN\fq \le \frac{x}{y}}} \rN\fq^{-1+\frac{1}{n_k}-\ep_k} \sum_{\substack{\ft \\ \rN\ft \le \frac{x}{\rN\fq y} }} \rN\ft^{-1+\frac{1}{n_k}-\ep_k} \log(\frac{2x}{\rN\fq \rN\ft y}) \sigma_{-\frac{1}{n_k+1}}(\ft) \\
 & & \times \sum_{\substack{\frr \\ \rN\frr \le \frac{x}{\rN\fq w} \\ (\frr,\ft)=(1)}} \rN\frr^{-1+\frac{1}{n_k}-\ep_k} \log(\frac{2x}{\rN\fq \rN\frr w})  \sigma_{-\frac{1}{n_k+1}}(\frr) \\
 & \ll & x^{1-\frac{1}{n_k}+\ep_k} \sum_{\substack{\fq \\ \rN\fq \le \frac{x}{y}}} \rN\fq^{-1+\frac{1}{n_k}-\ep_k} \sum_{\substack{\ft \\ \rN\ft \le \frac{x}{\rN\fq y} }} \rN\ft^{-1+\frac{1}{n_k}-\ep_k} \log(\frac{2x}{\rN\fq \rN\ft y}) \sigma_{-\frac{1}{n_k+1}}(\ft) \\
 & & \times \sum_{\substack{\frr \\ \rN\frr \le \frac{x}{\rN\fq w}}} \rN\frr^{-1+\frac{1}{n_k}-\ep_k} \log(\frac{2x}{\rN\fq \rN\frr w})  \sigma_{-\frac{1}{n_k+1}}(\frr). 
 \end{eqnarray*}
For the sum involving $\ft$ and $\frr$, they have the same form, so we compute the following sum, for $Q>1$,
\begin{eqnarray*}
 \sum_{\substack{\fd \\ \rN\fd \le Q}} \rN\fd^{-1+\frac{1}{n_k}-\ep_k} \log(\frac{2Q}{\rN\fd})  \sigma_{-\frac{1}{n_k+1}}(\fd) 
 & \ll & \sum_{k \ge 1} k (\frac{Q}{2^k})^{-1+\frac{1}{n_k}-\ep_k} \sum_{\substack{\fd \\ \frac{Q}{2^k} < \rN\fd \le \frac{2Q}{2^k} }} \sigma_{-\frac{1}{n_k+1}}(\fd) \\
 & \ll & \sum_{k \ge 1} k (\frac{Q}{2^k})^{\frac{1}{n_k}-\ep_k} \\
 & \ll & Q^{\frac{1}{n_k}-\ep_k}.
 \end{eqnarray*}
 Therefore,
 \begin{eqnarray*}
 \Sigma_D & \ll & x^{1-\frac{1}{n_k}+\ep_k} \sum_{\substack{\fq \\ \rN\fq \le \frac{x}{y}}} \rN\fq^{-1+\frac{1}{n_k}-\ep_k} \cdot (\frac{x}{\rN\fq y})^{\frac{1}{n_k}-\ep_k} \cdot (\frac{x}{\rN\fq w})^{\frac{1}{n_k}-\ep_k} \\
 & = & x^{1+\frac{1}{n_k}-\ep_k} (wy)^{-\frac{1}{n_k}+\ep_k} \sum_{\rN\fq \le \frac{x}{y}} \rN\fq^{-1-\frac{1}{n_k}+\ep_k} \ll x^{1+\frac{1}{n_k}-\ep_k} (wy)^{-\frac{1}{n_k}+\ep_k} \\
 & \ll & x.
 \end{eqnarray*}
 Combining our computations for $\Sigma_A, \Sigma_B,\Sigma_C,\Sigma_D$, we have
 \begin{eqnarray*}
  S_3 &=& x\log \frac{x}{y} + O\left((\frac{1}{(1-\beta_0)^3|\zeta_k^{'}(\beta_0)|}+\frac{C_{\beta_0}^{'}}{(1-\beta_0)^4})x\right) \\
  &=& x\log \frac{x}{y}+O\left(\max\{ \frac{1}{(1-\beta_0)^4},\frac{1}{(1-\beta_0)^6|\zeta_k^{'}(\beta_0)|} \} x\right). 
  \end{eqnarray*}
 Combining our computations for $S_1,S_2,S_3$, we obtain Theorem \ref{ig} in the case $wy>x$.

 \section{Proofs of lemmas and deductions}\label{pflem}
 
 In this section, we do three things so that all of our proofs are complete. First, we prove Theorem \ref{igtoug1} assuming Theorem \ref{ig}. Second, we prove Corollary \ref{ug} assuming Theorem \ref{igtoug1}. Third, we prove all the lemmas stated in Section \ref{lems}.

 \begin{proof}[Proof of Theorem \ref{igtoug1} assuming Theorem \ref{ig}]
We compute
\begin{eqnarray*}
(\log(\frac{y}{w}))^2 \sum_{\rN\fn \le x} \left( \sum_{\fd | \fn} \lambda_\fd \right)^2 &=& \sum_{\rN\fn \le x} \left( \sum_{\substack{\fd | \fn \\ \rN\fd \le y}} \mu_k(\fd)\log(\frac{y}{\rN\fd}) - \sum_{\substack{\fd | \fn \\ \rN\fd \le w}} \mu_k(\fd)\log(\frac{w}{\rN\fd}) \right)^2 \\
&=& \sum_{\rN\fn \le x} \left( \sum_{\substack{\fd | \fn \\ \rN\fd \le w}} \Lambda_1(\fd) \right)^2 + \sum_{\rN\fn \le x} \left( \sum_{\substack{\fe | \fn \\ \rN\fe \le y}} \Lambda_2(\fe) \right)^2 \\
& &- 2\sum_{\rN\fn \le x} \left( \sum_{\substack{\fd | \fn \\ \rN\fd \le w}} \Lambda_1(\fd) \right) \left( \sum_{\substack{\fe | \fn \\ \rN\fe \le y}} \Lambda_2(\fe) \right).
\end{eqnarray*}
Applying Theorem \ref{ig} to each of the three terms, we obtain Theorem \ref{igtoug1}.
\end{proof}

\begin{proof}[Proof of Corollary \ref{ug} assuming Theorem \ref{igtoug1}]

Define 
\[ \Delta(\fn) := \sum_{\fd | \fn} \lambda_\fd. \] 
We compute the left-hand side of the statement in Corollary \ref{ug}. Note that it does not affect the result whether we start the sum from 1 or from $w$ because $\Delta(\fn)=0$ if $1<\fn<w$. Precisely:

\begin{eqnarray*}
\sum_{\rN\fn \le x}  \Delta(\fn)^2 \rN\fn^{1-2\alpha} 
&=& x^{1-2\alpha} \sum_{\rN\fn \le x}  \Delta(\fn)^2 - \sum_{\rN\fn \le x}  \Delta(\fn)^2 (x^{1-2\alpha} - \rN\fn^{1-2\alpha}) \\
&=& x^{1-2\alpha} \sum_{\rN\fn \le x}  \Delta(\fn)^2 - \sum_{\rN\fn \le x}  \Delta(\fn)^2 \int_{\rN\fn}^x (1-2\alpha) t^{-2\alpha} dt +1 \\
&=& x^{1-2\alpha} \sum_{\rN\fn \le x}  \Delta(\fn)^2 + (2\alpha-1) \int_1^x t^{-2\alpha} \left( \sum_{n=1}^t \Delta(\fn)^2 \right) dt+1.
\end{eqnarray*}
Theorem \ref{igtoug1} tells us that
\[ \sum_{\rN\fn \le x} \Delta(\fn)^2 = \frac{x}{\log(y/w)}+O(\frac{C_{\beta_0}x}{\log^2(y/w)}). \]
The constant associated to the error term may depend on $n_k,d_k$, and we omit the notations in this proof.

Thus,
\begin{eqnarray*}
\sum_{\rN\fn \le x}  \Delta(\fn)^2 \rN\fn^{1-2\alpha} &=& \frac{x^{2-2\alpha}}{\log(y/w)} + (2\alpha-1) \int_1^x t^{-2\alpha} \left( \frac{t}{\log(y/w)} \right) dt + O(\frac{C_{\beta_0}x^{2-2\alpha}}{\log^2(y/w)}) \\
&=& \frac{x^{2-2\alpha}}{2-2\alpha} \cdot \frac{1}{\log(y/w)}+O(\frac{C_{\beta_0}x^{2-2\alpha}}{\log^2(y/w)}).
\end{eqnarray*}
Similarly,
\[ \sum_{\rN\fn <w}  \Delta(\fn)^2 \rN\fn^{1-2\alpha} = \frac{w^{2-2\alpha}}{2-2\alpha} \cdot \frac{1}{\log(y/w)}+O(\frac{C_{\beta_0}w^{2-2\alpha}}{\log^2(y/w)}). \]
Therefore,
\begin{eqnarray*}
 \sum_{w \le \rN\fn \le x} \Delta(\fn)^2 \rN\fn^{1-2\alpha} &=& \frac{1}{\log(y/w)} \cdot \frac{x^{2-2\alpha} - w^{2-2\alpha}}{2-2\alpha} (1+O(\frac{C_{\beta_0}}{\log(y/w)}) \\
 & \ll & \frac{C_{\beta_0}}{\log(y/w)} \cdot \frac{x^{2-2\alpha} - w^{2-2\alpha}}{2-2\alpha}
 \end{eqnarray*} 
Since
\[ \frac{x^{2-2\alpha} - w^{2-2\alpha}}{2-2\alpha} = \int_{\log w}^{\log x} e^{(2-2\alpha)t} dt \le \log(x/w)x^{2-2\alpha}, \]
this completes the proof of Corollary \ref{ug} assuming Theorem \ref{igtoug1}.
\end{proof}

\begin{proof}[Proof of Lemma \ref{power}]
Using partial summation and applying Theorem \ref{counting}, we write
\begin{eqnarray*}
\sum_{\substack{\rN\fn \le Q}} \rN\fn^{-a} &=& Q^{-a} \sum_{\substack{\rN\fn \le Q}} 1-\sum_{\substack{\rN\fn \le Q}} (Q^{-a}-\rN\fn^{-a}) \\
&=& s_k Q^{1-a}-\sum_{\substack{\rN\fn \le Q}} \int_{\rN\fn}^Q (-a)t^{-1-a} dt + O(Q^{1-\frac{1}{n_k}-a}) \\
&=& s_k Q^{1-a}+\int_1^Q at^{-1-a}\sum_{\substack{\rN\fn \le t}} 1 dt + O(Q^{1-\frac{1}{n_k}-a}) \\
&=& s_k Q^{1-a}+\int_1^Q as_kt^{-a} dt +O(\int_1^Q at^{-\frac{1}{n_k}-a}dt) + O(Q^{1-\frac{1}{n_k}-a}).
\end{eqnarray*}

If $0<a< 1-\frac{1}{n_k}$ or $1-\frac{1}{n_k} < a <1$, it is $\frac{s_k}{1-a}Q^{1-a}+ O(Q^{1-\frac{1}{n_k}-a})+O(1).$

If $a = 1-\frac{1}{n_k}$, it is $s_kn_kQ^{\frac{1}{n_k}} + O(\log Q).$

If $a = 1$, it is $s_k \log Q + O(1).$
\end{proof}

\begin{proof}[Proof of Lemma \ref{logpower}]
Lemma \ref{logpower} follows from Lemma \ref{power} (in the case $a = 1-\frac{1}{n_k}$) and partial summation.
\end{proof}

\begin{proof}[Proof of Lemma \ref{01073}]
The first inequality holds because $\sigma_a(\fn) \ge 1$ by definition.

 Recall the function $d(\fn)$ in Section \ref{nots}. For any $\ep>0$, $d(\fn) \ll \rN\fn^\ep$.
We have
\begin{eqnarray*}
 \sum_{\rN\fn \le Q} \sigma_a^2(\fn) &=& \sum_{\rN\fn \le Q} \sum_{\fd,\fe | \fn} \rN\fd^a \rN\fe^a = \sum_{\substack{\rN\fd \le Q \\ \rN\fe \le Q}} \rN\fd^a \rN\fe^a \sum_{\substack{\rN\fn \le Q \\ [\fd,\fe] | \fn}} 1 \ll Q\sum_{\fd,\fe} \rN\fd^a \rN\fe^a \rN[\fd,\fe]^{-1} \\
 & \le & Q\sum_{\fd,\fe} \rN[\fd,\fe]^{-1+a} \le Q\sum_{\fq} d^2(\fq)\rN\fq^{-1+a} \ll_a Q. 
 \end{eqnarray*}
\end{proof}

\begin{proof}[Proof of Lemma \ref{loginv}]
We only prove the first statement. The second statement follows from the same method.
Using partial summation, we write
\begin{eqnarray*}
\sum_{\rN\fn \le Q} \frac{1}{\rN\fn (\log(\frac{2Q}{\rN\fn}))^2} &=& \sum_{\rN\fn \le Q} \frac{1}{Q(\log (\frac{2Q}{Q}))^2} - \sum_{\rN\fn \le Q} \left[ \frac{1}{Q (\log(\frac{2Q}{Q}))^2} - \frac{1}{\rN\fn (\log(\frac{2Q}{\rN\fn}))^2} \right] \\
&=& O(1) + \sum_{\rN\fn \le Q} \int_{\rN\fn}^Q \frac{\log 2Q-\log t-2}{t^2(\log 2Q-\log t)^3} dt \\
&=& \int_1^Q \frac{\log 2Q-\log t-2}{t^2(\log 2Q-\log t)^3} \left( \sum_{\rN\fn \le t} 1 \right) dt + O(1) \\
&=& O\left( \int_1^Q \frac{\log 2Q-\log t-2}{t(\log 2Q-\log t)^3} dt \right) \\
&=& O\left(\int_1^Q \frac{1}{t(\log 2Q-\log t)^2} dt \right) + O\left(\int_1^Q \frac{2}{t(\log 2Q-\log t)^3} dt \right) \\
&=& O(1).
\end{eqnarray*}
\end{proof}

\begin{proof}[Proof of Lemma \ref{muandphi}]
Let $a_\fn = \mu_k^2(\fn)\prod_{\fp | \fn} (1-\frac{1}{\rN\fp})^{-1}$. Then for $\Re(s)>1$, we have
\begin{equation}\label{azetab}
\sum_{\fn} \frac{a_\fn}{\rN\fn^s} = \prod_\fp (1+\frac{1}{\rN\fp^s}(1-\frac{1}{\rN\fp})^{-1}) = \zeta_k(s) \sum_{\fm} \frac{b_\fm}{\rN\fm^s}
\end{equation}
where
\begin{equation*}
\sum_{\fm} \frac{b_\fm}{\rN\fm^s} = \prod_{\fp} (1+\frac{1}{\rN\fp^s}(1-\frac{1}{\rN\fp})^{-1})(1-\frac{1}{\rN\fp^s}) = \prod_\fp (1+\frac{1}{\rN\fp^s} \cdot \frac{1}{\rN\fp - 1} - \frac{1}{\rN\fp^{2s}} \cdot \frac{\rN\fp}{\rN\fp - 1})
\end{equation*}
and $\displaystyle \sum_{\fm} \frac{b_\fm}{\rN\fm^s}$ is absolutely convergent for $\Re(s) > \frac12$. Here, the sequence $\{a_\fn\}$ can be regarded as the convolution of the constant sequence $1$ and the sequence $\{b_\fm\}$. Let $\displaystyle F(Q)=\frac{1}{s_k} \sum_{\substack{\fd \\ \rN\fd \le Q}} 1$ and let $\displaystyle G_v(Q)=\sum_{\rN\fm \le Q} |b_\fm|$. Then
\[ F(Q) = Q+O(Q^{1-\frac{1}{n_k}}), \ \ \ G_v(Q) = O(Q^{\frac12 + \ep}) \text{ for any } \ep>0. \] 
The sum $\sum_{\rN\fn \le Q} a_\fn$ can be estimated from the bounds of $F(Q)$ and $G_v(Q)$ via the stability theorem, e.g., \cite[Theorem 3.29]{BD04}. In particular, we have
\begin{equation*}
\sum_{\rN\fn \le Q} a_\fn = AQ+\begin{cases} O(Q^{\frac12+\ep}) \text{ for any } \ep>0 & n_k=2, \\ O(Q^{1-\frac{1}{n_k}}) & n_k  \ge 3, \end{cases}
\end{equation*}
where $\displaystyle A=\lim_{s \to 1^{+}} (s-1) \sum_{\fn} \frac{a_\fn}{\rN\fn^s}.$ Since $\displaystyle \sum_{\fm} \frac{b_\fm}{\rN\fm^s} \to 1$ as $s \to 1^{+}$, we have $A=\mathrm{Res}_{s=1} \zeta_k(s)=s_k$ by (\ref{azetab}).

Lemma \ref{muandphi} follows from partial summation:
\[ \sum_{\rN\fn \le Q} \frac{\mu_k^2(\fn)}{\varphi(\fn)} = \sum_{\rN\fn \le Q} \frac{a_\fn}{\rN\fn} = Q^{-1} \sum_{\rN\fn \le Q} a_\fn + \int_1^Q u^{-2} \left( \sum_{\rN\fn \le u} a_\fn \right) du = s_k \log Q+O(1). \]
\end{proof}

\begin{proof}[Proof of Lemma \ref{sqfrc}]
We prove this lemma by the inclusion-exclusion principle. 

Let $\cA$ be the set of all squarefree ideals in $\cO_k$ with norm at most $Q$. For a set $X$ in $\{\fn: \rN\fn \le Q\}$, denote $\overline{X}$ to be the complement of $X$. For any prime ideal $\fq$, let $\cA_\fq = \{ \fn: \fq | \fn, \rN\fn \le Q \}$ and let $M$ denote the largest number of prime ideals $\fq_1,\dots,\fq_M$ such that $\rN\fq_1 \dots \rN\fq_M \le \sqrt{Q}$. Then by the inclusion-exclusion principle, we have
\begin{equation}\label{cala}
 |\cA| = \left| \bigcap_{\fq \text{ prime }} \overline{\cA_\fq} \right| = \sum_{m=0}^M (-1)^m \sum_{\fq_1,\dots,\fq_m} \left| \cA_{\fq_1} \cap \dots \cap \cA_{\fq_m}  \right|. 
 \end{equation}
Denote $S:=\cA_{\fq_1} \cap \dots \cap \cA_{\fq_m}$. Then 
\[ S=\{ \rN\fn \le Q: \fq_1^2 \dots \fq_m^2 | \fn \}=\{ \frr:  \rN\frr \le \frac{Q}{\rN\fq_1 \dots \rN\fq_m} \}=\frac{s_kQ}{\rN\fq_1^2 \dots \rN\fq_m^2}+O((\frac{Q}{\rN\fq_1^2 \dots \rN\fq_m^2})^{1-\frac{1}{n_k}}). \] Plugging this into (\ref{cala}), we have
\begin{eqnarray*}
|\cA| &=& \sum_{m=0}^M (-1)^m \sum_{\fq_1,\dots,\fq_m} \frac{s_kQ}{\rN\fq_1^2 \dots \rN\fq_m^2} + O(\sum_{m=0}^M \sum_{\fq_1,\dots,\fq_m} (\frac{Q}{\rN\fq_1^2 \dots \rN\fq_m^2})^{1-\frac{1}{n_k}} ) \\
&=& s_kQ \sum_{\rN\fd \le \sqrt{Q}} \frac{\mu_k(\fd)}{\rN\fd^2} + Q^{1-\frac{1}{n_k}}O(\sum_{\rN\fd \le \sqrt{Q}} \frac{1}{\rN\fd^{2-\frac{2}{n_k}}}) \\
&=& s_kQ \sum_{\fd} \frac{\mu_k(\fd)}{\rN\fd^2} - s_kQ \sum_{\rN\fd > \sqrt{Q}} \frac{\mu_k(\fd)}{\rN\fd^2} + Q^{1-\frac{1}{n_k}}O(\sum_{\rN\fd \le \sqrt{Q}} \frac{1}{\rN\fd^{2-\frac{2}{n_k}}}) \\
&=& \frac{s_kQ}{\zeta_k(2)} + Q^{1-\frac{1}{n_k}}O(\sum_{\rN\fd \le \sqrt{Q}} \frac{1}{\rN\fd^{2-\frac{2}{n_k}}}).
\end{eqnarray*}
If $n_k=2$, $|\cA| = \frac{s_kQ}{\zeta_k(2)}+O(Q^{\frac12+\ep_k})$ for any $\ep_k>0$; if $n_k \ge 3$, $|\cA| = \frac{s_kQ}{\zeta_k(2)} + O(Q^{1-\frac{1}{n_k}}) $.
\end{proof}

\begin{proof}[Proof of Lemma \ref{0108}]
For any ideal $\frr$, let 
\[ \cD_\frr := \{ \fd: \fp | \fd \Rightarrow \fp | \frr \}. \]
Then we have
\[ \sum_{\substack{\fd,\fm \\ \fd \in \cD_\frr \\ \fd\fm = \fn}} \lambda(\fd)\mu_k^2(\fm) = \begin{cases} \mu_k^2(\fn) & \text{ if } (\fn,\frr)=(1), \\ 0 & \text{ if } (\fn,\frr) \neq (1). \end{cases} \]
Thus,
\begin{eqnarray*}
\sum_{\substack{\rN\fn \le Q \\ (\fn,\frr) = (1)}} \mu_k^2(\fn)  &=& \sum_{\fd \in \cD_\frr} \lambda(\fd) \sum_{\rN\fm \le \frac{Q}{\rN\fd}} \mu_k^2(\fm) \\
&=& \frac{s_kQ}{\zeta_k(2)} \sum_{\fd \in \cD_\frr} \frac{\lambda(\fd)}{\rN\fd} + O(Q^{1-\frac{1}{n_k}+\ep_k} \sum_{\fd \in \cD_\frr} \rN\fd^{-1+\frac{1}{n_k}-\ep_k}) \\
&=& \frac{s_k}{\zeta_k(2)} \frac{\rN\frr}{\kappa(\frr)}Q + O(Q^{1-\frac{1}{n_k}+\ep_k} \prod_{\fp | \frr} \frac{1}{1-\rN\fp^{-1+\frac{1}{n_k}-\ep_k}}) \\
&=& \frac{s_k}{\zeta_k(2)} \frac{\rN\frr}{\kappa(\frr)}Q + O(Q^{1-\frac{1}{n_k}+\ep_k} \prod_{\fp | \frr} (1+\frac{1}{\rN\fp^{1-\frac{1}{n_k}+\ep_k}-1})) \\
&=& \frac{s_k}{\zeta_k(2)} \frac{\rN\frr}{\kappa(\frr)}Q +  O(Q^{1-\frac{1}{n_k}+\ep_k} \sigma_{-\frac{1}{n_k+1}}(\frr)).
\end{eqnarray*}
\end{proof}

\begin{proof}[Proof of Lemma \ref{678ing}]
For (\ref{6ing}), we will prove the following stronger form:
\begin{equation}\label{stronger}
\sum_{\rN\fn \le Q} \mu_k(\fn)-\frac{Q^{\beta_0}}{\beta_0\zeta_k^{'}(\beta_0)} \ll Q\exp(-\frac{c_1}{2}(\log Q)^{1/10}n_k^{-2}),
\end{equation}
where $c_1=\min\{c,10^{-6}\}$, $c$ is the absolute constant in Theorem \ref{ik04}.
The term $\frac{Q^{\beta_0}}{\beta_0\zeta_k^{'}(\beta_0)}$ exists if and only if the Siegel zero exists.
Note that this form is equivalent to the effective prime ideal theorem (see e.g., \cite{TZ17}). For completeness, we give a proof of (\ref{stronger}). 
The approach of the proof is similar to the proof of Theorem 1.2 in \cite{FM12}. However, the error term in \cite[Theorem 1.2]{FM12} appears to be ineffective, and \cite[p.7]{FM12} appears to omit the bound for the integral from $-t_0$ to $t_0$. Thus, we rewrite the proof as follows.

A direct approach to prove (\ref{stronger}) has a problem since there may be many ideals with the same norm. Thus, we apply the following result of Liu and Ye.

\begin{theorem}\label{ly}\cite[Theorem 2.1]{LY07}
Let $f(s)=\sum_{n=1}^\infty a_n n^{-s}$ converge absolutely for $\sigma>\sigma_0>0$, and let $B(\sigma)=\sum_{n=1}^\infty |a_n|n^{-\sigma}$. Then for $b>\sigma_0$, $x \ge 2$, $T \ge 2$, and $H \ge 2$, we have
\[ \sum_{n \le x} a_n = \frac{1}{2\pi i} \int_{b-iT}^{b+iT} f(s) \frac{x^s}{s}ds+O\left(\sum_{x-x/H<n \le x+x/H} |a_n| \right)+O\left(\frac{x^bHB(b)}{T} \right). \]
\end{theorem}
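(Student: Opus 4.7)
The plan is to deduce this formula from the classical truncated Perron integral
\[ I(y,T) := \frac{1}{2\pi i}\int_{b-iT}^{b+iT}\frac{y^s}{s}\,ds = [y>1] + O\!\left(\frac{y^b}{T|\log y|}\right), \qquad y>0,\ y\neq 1, \]
which one proves by closing the contour with a large rectangle (to the right if $0<y<1$, to the left past the pole $s=0$ if $y>1$) and bounding the three added sides using $|y^s/s|\le y^b/|s|$. Taking $y=x/n$, multiplying by $a_n$, and summing over $n\ge 1$, the left-hand side becomes $\frac{1}{2\pi i}\int_{b-iT}^{b+iT} f(s)\,x^s/s\,ds$ after swapping sum and integral --- legitimate because $f$ converges absolutely on $\Re s=b>\sigma_0$ --- while the right-hand side becomes $\sum_{n<x}a_n$ plus a termwise error $\sum_{n\neq x} a_n\,E(x/n)$, where $E(y)=O(y^b/(T|\log y|))$.

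The main obstacle is the blow-up of $1/|\log(x/n)|$ as $n$ approaches $x$, and the parameter $H$ is introduced precisely to circumvent it. I would split $\{n\ge 1\}$ into a near window $|n-x|\le x/H$ and its complement. In the far range a first-order Taylor expansion gives $|\log(x/n)|\gg 1/H$, so each termwise error is $|a_n\,E(x/n)|\ll H|a_n|(x/n)^b/T$; summing against the absolute-value Dirichlet series $B(b)$ yields the second advertised bound $HB(b)x^b/T$. In the near range I would avoid the Perron estimate entirely: before applying the formula I pull the near-$x$ terms out of $\sum_{n\le x}a_n$ and bound them directly by $\sum_{|n-x|\le x/H}|a_n|$, which is the first error term. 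The minor issue that $n=x$ itself contributes $I(1,T)=\tfrac12+O(b/T)$ rather than $0$ or $1$ is absorbed into this window, since $x$ always lies inside it.

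The remaining work is bookkeeping: verify that the near/far error bounds combine consistently and that the Fubini interchange is valid over the truncated contour. Neither is delicate, since $\sum |a_n| n^{-b} = B(b)<\infty$ by hypothesis gives absolute integrability on every compact piece of the line $\Re s = b$. I expect no genuine difficulty beyond making the Perron lemma itself precise; that lemma is the only nontrivial analytic input, and it reduces to an elementary contour-deformation estimate on three sides of a rectangle.
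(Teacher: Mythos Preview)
The paper does not prove this statement at all: Theorem~\ref{ly} is quoted verbatim from Liu--Ye \cite[Theorem 2.1]{LY07} and is used as a black box inside the proof of Lemma~\ref{678ing}. So there is no ``paper's own proof'' to compare your proposal against.

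That said, your outline is the standard and correct derivation of this refined truncated Perron formula. The one place where your wording is a little loose is the handling of the near window: you say you ``pull the near-$x$ terms out of $\sum_{n\le x}a_n$ and bound them directly,'' but what you actually need is a uniform bound on the Perron error $|I(x/n,T)-[n<x]|$ for $n$ in that window, since the integral still sees those terms. The usual way is to invoke the sharper form
\[
\left| I(y,T) - [y>1] \right| \;\ll\; y^{b}\,\min\!\Bigl(1,\ \frac{1}{T\,|\log y|}\Bigr),
\]
so that for $|n-x|\le x/H$ one simply takes the $\min$ to be $1$ and gets an $O(1)$ error per term, hence $O\bigl(\sum_{|n-x|\le x/H}|a_n|\bigr)$ in total. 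With that adjustment everything goes through exactly as you describe, including the far-range estimate $|\log(x/n)|\gg 1/H$ and the resulting $O(Hx^bB(b)/T)$ bound.
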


Let $a=1-\frac{c_1}{n_k^2 (\log |d_k| (T+3)^{n_k})^9}$, 
where $c_1$ is the absolute constant we defined in the beginning of the proof.
We apply Theorem \ref{ly} for $x=Q$, $f(s)=\frac{1}{\zeta_k(s)}, b=1+\frac{1}{\log Q}, H=\sqrt{T}$ to get
\begin{eqnarray*} 
\sum_{\rN\fq \le Q} \mu_k(\fq) &=& \frac{1}{2\pi i} \int_{1+\frac{1}{\log Q}-iT}^{1+\frac{1}{\log Q}+iT} \frac{1}{\zeta_k(s)} \frac{Q^s}{s} ds \\
& & +O((Q+\frac{Q}{\sqrt{T}})-(Q-\frac{Q}{\sqrt{T}})+Q^{1-\frac{1}{n_k}})+O\left( \frac{Q\zeta_k(1+\frac{1}{\log Q})}{\sqrt{T}} \right). 
\end{eqnarray*}
For the last term, we apply $\zeta_k(s)=\frac{s_k}{s-1}+O(1)$ to get $\zeta_k(1+\frac{1}{\log Q})=O(\log Q)$. We will choose $T$ to satisfy $T \ll Q^\ep$ for any $\ep>0$, so the last error term dominates and we have
\begin{equation}\label{stoi30}
 \sum_{\rN\fq \le Q} \mu_k(\fq)=\frac{1}{2\pi i} \int_{1+\frac{1}{\log Q}-iT}^{1+\frac{1}{\log Q}+iT} \frac{1}{\zeta_k(s)} \frac{Q^s}{s} ds+O\left( \frac{Q\log Q}{\sqrt{T}} \right). 
 \end{equation}

Let $\psi(t)=n_k^2(\log |d_k| (|t|+3)^{n_k})^9$, so that $a=1-\frac{c_1}{\psi(T)}$. Put
 \[ l(t)=\begin{cases} 1-\frac{c_1}{\psi(10)} &  |t| \le 10, \\ 1-\frac{c_1}{\psi(t)} & 10 \le |t| \le T. \end{cases} \]
 By the residue theorem and Theorem \ref{ik04}, we have
\begin{eqnarray*}
\nonumber \frac{1}{2\pi i} \int_{1+\frac{1}{\log Q}-iT}^{1+\frac{1}{\log Q}+iT} \frac{1}{\zeta_k(s)} \frac{Q^s}{s} ds &=& \mathrm{Res}_{s=\beta_0} \frac{1}{\zeta_k(s)} \frac{Q^{\beta_0}}{\beta_0} \\
& & + \frac{1}{2\pi i} \left(\int_{b-iT}^{a-iT}+\int_{l(t),\ t=-T}^{t=T}+\int_{a+iT}^{b+iT} \right) \frac{1}{\zeta_k(s)}  \frac{Q^s}{s} ds \\
 & =: &  \frac{Q^{\beta_0}}{\beta_0\zeta_k^{'}(\beta_0)}+ I_1+I_2+I_3.
\end{eqnarray*}
The term $\frac{Q^{\beta_0}}{\beta_0\zeta_k^{'}(\beta_0)}$ exists if and only if the Siegel zero exists.


To proceed, a bound for $\frac{1}{\zeta_k(s)}$ is necessary. We will obtain the bound for $\frac{1}{\zeta_k(s)}$ from a bound for $\frac{\zeta_k^{'}}{\zeta_k}(s)$, which we can find in \cite[p.656]{Lan03}.
  Let $\sigma_1=1-\frac{c_1}{\psi(10)}$. 
 If the Siegel zero $\beta_0$ exists and $\sigma_1 < \beta_0 < 1$, we cannot let $\sigma_1$ and $\beta_0$ be arbitrarily close (see (\ref{sigmabetaaway}) below). We can achieve this by the following method. There are two possibilities: either $\frac{1+\sigma_1}{2} < \beta_0$ or $\frac{1+\sigma_1}{2} \ge \beta_0$. If we are in the latter case, we choose $c_1^{'}=\frac{c_1}{4}$ and $\sigma_1^{'}=1-\frac{c_1^{'}}{\psi(10)}$. Let $c_1^{'},\sigma_1^{'}$ be our new values of $c_1,\sigma_1$. Then our new value satisfies $\beta_0<\sigma_1$. In this case, our argument below still holds and all the terms involving $\beta_0$ are omitted, since we will only consider the region to the right of $\sigma_1$. Therefore, we can assume that our choice of $\sigma_1$ satisfies $\frac{1+\sigma_1}{2} < \beta_0$ (i.e., $\beta_0-\sigma_1>\frac12(1-\sigma_1)$) without loss of generality.

 Let $s=\sigma+it$.
 Then $-\frac{\zeta_k^{'}}{\zeta_k}(s)$ is regular for $\sigma \ge l(t)$ except at $s=1$ 
 and $s=\beta_0$ (if the Siegel zero exists and satisfies $\beta_0-\sigma_1>\frac12(1-\sigma_1)$) with both residues equal to $1$ and $-1$, respectively, and
 \[ -\frac{\zeta_k^{'}}{\zeta_k}(s) = \begin{cases} \frac{-1}{s-\beta_0}+\frac{1}{s-1}+O(1) & l(t) \le \sigma \le 2, |t| \le 10, \\ O(\psi(t)) & l(t) \le \sigma \le 2, |t| \ge 10. \end{cases} \]
 Thus, there exists an absolute constant $C$ such that $\left| \frac{\zeta_k^{'}}{\zeta_k}(s) \right| \le C\psi(t)$ for $s$ in the range $l(t) \le \sigma \le 2, |t| \ge 10$.
 
 To obtain a bound of $\frac{1}{\zeta_k(s)}$, we compute for $\Re(s)=\sigma \ge 1-\frac{c_1}{\psi(\Im(s))}$, $|\Im(s)|=t \ge 10$, following \cite[p. 311]{Ivi12},
 \begin{eqnarray*}
  \log \frac{1}{|\zeta_k(s)|} &=& -\Re \log \zeta_k(s) \\
  &=& -\Re \log \zeta_k(1+\frac{1}{\psi(t)}+it)+\int_\sigma^{1+\frac{1}{\psi(t)}} \Re \frac{\zeta_k^{'}}{\zeta_k}(u+it)du \\
  & \le & \log \zeta_k(1+\frac{1}{\psi(t)})+\int_\sigma^{1+\frac{1}{\psi(t)}} C\psi(t) du \\
  & \le & \log C\psi(t) + O(1).
  \end{eqnarray*}
  For $\Re(s)=\sigma =\sigma_1= 1-\frac{c_1}{\psi(10)}$, $0<|\Im(s)|=t \le 10$, we compute (recalling $\beta_0-\sigma>\frac12(1-\sigma)$)
   \begin{eqnarray}\label{sigmabetaaway}
\nonumber  \log \frac{1}{|\zeta_k(s)|} &=& -\Re \log \zeta_k(s) \\
\nonumber  &=& -\Re \log \zeta_k(1+\frac{1}{\psi(10)}+it)+\int_\sigma^{1+\frac{1}{\psi(10)}} \Re \frac{\zeta_k^{'}}{\zeta_k}(u+it)du \\
 \nonumber & \le & \log \zeta_k(1+\frac{1}{\psi(10)})+\int_\sigma^{1+\frac{1}{\psi(10)}} \Re(\frac{-1}{u+it-\beta_0}+\frac{1}{u+it-1}) du+O(1) \\
  &=& \frac12  (\log((1+\frac{1}{\psi(10)}-\beta_0)^2+t^2)-\log((\sigma-\beta_0)^2+t^2)) \\
 \nonumber & &+ \frac12 (\log((\frac{1}{\psi(10)})^2+t^2)-\log((\sigma-1)^2+t^2))  +O(1) \\
 \nonumber & = & O(1) + O(\log(c_1^2+t^2)) \\
 \nonumber & = & O(1).
  \end{eqnarray}
  Therefore, 
  \begin{equation}\label{zetainvbd}
  \frac{1}{\zeta_k(s)} \ll \begin{cases} \psi(t) &  l(t) \le \sigma \le 2, |t| \ge 10, \\ 1 & \sigma = l(t)=1-\frac{c_1}{\psi(10)}, |t| \le 10. \end{cases}
  \end{equation}

 We apply (\ref{zetainvbd}) to estimate $I_1,I_2,I_3$. For $I_1$,
 \begin{equation}\label{i1bd}
  |I_1| \le \frac{1}{2\pi} \int_a^b \left| \frac{1}{\zeta_k(\sigma-iT)} \right| \frac{Q^\sigma}{|\sigma-iT|} d\sigma \ll (\log T)^{9}T^{-1} \int_a^b Q^\sigma d\sigma \ll QT^{-1}(\log T)^9. 
  \end{equation}
 The same bound holds for $I_3$. For $I_2$,
 \begin{eqnarray}\label{i2bd}
\nonumber  |I_2| & \le &  \frac{1}{2\pi} \left( \int_{l(t),t=-T}^{t=-10}+\int_{l(t), t=-10}^{t=10}+\int_{l(t),t=10}^{t=T} \right) \left| \frac{1}{\zeta_k(\sigma+it)} \right| \frac{Q^\sigma}{|\sigma+it|} ds \\
& \ll &  Q^a  \int_{10}^T (\log T)^{9} \frac{1}{|a+it|} dt \ll Q^a (\log T)^{10}. 
  \end{eqnarray}
  Our choice of $T$ will balance the big O terms in (\ref{stoi30}) and (\ref{i2bd}). Note that the term in (\ref{i1bd}) is $\ll \frac{Q\log Q}{\sqrt{T}}$, which is the term in (\ref{stoi30}). Upon choosing
  \begin{equation}\label{tvalue}
   T=\exp(\frac{(\log Q)^{\frac{1}{10}}-\log |d_k|}{n_k})-3, 
   \end{equation}
  we see that the term in (\ref{stoi30}) is bounded above by
  \[ Q\log Q\cdot  \exp(-\frac{(\log Q)^{\frac{1}{10}}-\log |d_k|}{n_k}) \ll Q\exp(-\frac12(\log Q)^{1/10}n_k^{-1})   \]
  and the term in (\ref{i2bd}) is bounded above by
  \begin{eqnarray*}
   Q^{1-\frac{c_1}{n_k^2 (\log (|d_k| \exp((\log Q)^{\frac{1}{10}}-\log |d_k|)))^9}}(\log T)^{10}&=&Q^{1-\frac{c_1}{n_k^2 (\log Q)^{\frac{9}{10}}}}(\log T)^{10} \\
   & \ll & Q\exp(-\frac{c_1}{2}(\log Q)^{1/10}n_k^{-2}). 
   \end{eqnarray*}
  Since we assumed $T \ge 10$, by (\ref{tvalue}), our bound holds for $Q \ge \exp((n_k \log 13+\log |d_k|)^{10})$. If $Q < \exp((n_k \log 13+\log |d_k|)^{10})$, then $\sum_{\rN\fn \le Q} \mu_k(\fn) \ll 1$.
  This finishes the proof of (\ref{stronger}), hence (\ref{6ing}) holds.


For (\ref{7ing}),
we follow the proof of (\ref{6ing}) but choose $f(s)=\frac{1}{\zeta_k(s+1)}$ 
 instead. The Dirichlet series is $f(s)=\sum_\fq a_\fq=\frac{\mu_k(\fq)}{\rN\fq^{s+1}}$. In this new setting, we choose $b=\frac{1}{\log Q}$. Then the proof of (\ref{6ing}) is adapted here.

For (\ref{8ing}),
we follow the proof of (\ref{6ing}) but choose $f(s)=\frac{\zeta_k^{'}}{\zeta_k^2}(s+1)$
 instead. The Dirichlet series is $f(s)=\sum_\fq \frac{\mu_k(\fq) \log \rN\fq}{\rN\fq^{s+1}}$. In this new setting, we choose $b=\frac{1}{\log Q}$. Then the proof of (\ref{6ing}) is adapted here. Note that there is a pole at $s=0$, and the residue is 
\[ \text{Res}_{s=0} \frac{\zeta_k^{'}}{\zeta_k^2}(s+1) \cdot \frac{Q^s}{s}=\text{Res}_{s=1} \frac{\zeta_k^{'}}{\zeta_k^2}(s)=-\frac{1}{\text{Res}_{s=1} \zeta_k(s)}=-\frac{1}{s_k}. \]
There is also a possible pole of order 2 at $s=\beta_0-1$ if the Siegel zero exists, which gives the term $\frac{Q^{\beta_0-1}[(\beta_0-1)\log Q-1]}{(\beta_0-1)^2\zeta_k^{'}(\beta_0)}$. Note also that the bound for $\frac{\zeta_k^{'}}{\zeta_k^2}(s+1)$ comes from multiplying the bound for $\frac{\zeta_k^{'}}{\zeta_k}(s+1)$ and the bound for $\frac{1}{\zeta_k}(s+1)$.


\end{proof}

\begin{proof}[Proof of Lemma \ref{10ing}]
For any integral ideal $\frr$, again let 
\[ \cD_\frr := \{ \fd: \fp | \fd \Rightarrow \fp | \frr \}. \]
Then it is clear that
\begin{equation*}
\sum_{\substack{\fd \in \cD_\frr \\ \fd | \fn}} \mu_k(\frac{\fn}{\fd}) = 
\begin{cases} \mu_k(\fn) & (\fn,\frr) = (1), \\ 0 & (\fn,\frr) \neq (1). 
\end{cases} 
\end{equation*}
Thus,
\[ \sum_{\substack{\rN\fn \le Q \\ (\fn,\frr) = (1)}} \frac{\mu_k(\fn)}{\rN\fn} \log(\frac{Q}{\rN\fn}) = \sum_{\fd \in \cD_\frr} \frac{1}{\rN\fd} \sum_{\rN\fm \le \frac{Q}{\rN\fd}} \frac{\mu_k(\fm)}{\rN\fm} \log(\frac{Q}{\rN\fm\rN\fd}). \]
By (\ref{7ing}) and (\ref{8ing}) in Lemma \ref{678ing}, the above is equal to
\begin{eqnarray}\label{01072}
\nonumber &=& \sum_{\substack{\fd \in \cD_\frr \\ \rN\fd \le Q}} \frac{1}{\rN\fd} (\frac{1}{s_k} +\frac{(\frac{Q}{\rN\fd})^{\beta_0-1}}{(\beta_0-1)^2\zeta_k^{'}(\beta_0)}+ O_A((\log \frac{2Q}{\rN\fd})^{-A})) \\
\nonumber &=& \frac{1}{s_k} \sum_{\fd \in \cD_\frr} \frac{1}{\rN\fd} +\frac{\sum_{\substack{\fd \in \cD_\frr}} \rN\fd^{-\beta_0}}{(\beta_0-1)^2\zeta_k^{'}(\beta_0)} Q^{\beta_0-1} \\
\nonumber & &+ O(\sum_{\substack{\fd \in \cD_\frr \\ \rN\fd > Q}} \frac{1}{\rN\fd})+O(\frac{\sum_{\substack{\fd \in \cD_\frr \\ \rN\fd > Q}} \rN\fd^{-\beta_0}}{(\beta_0-1)^2\zeta_k^{'}(\beta_0)} Q^{\beta_0-1})+O_A(\sum_{\substack{\fd \in \cD_\frr \\ \rN\fd \le Q}} \frac{1}{\rN\fd} (\log \frac{2Q}{\rN\fd})^{-A}). \\
& &
\end{eqnarray}
Here the first term is equal to $\frac{\rN\frr}{s_k\varphi(\frr)}$. Let 
\[ C_{\frr,\beta_0}=\frac{\sum_{\substack{\fd \in \cD_\frr}} \rN\fd^{-\beta_0}}{(\beta_0-1)^2\zeta_k^{'}(\beta_0)}. \]

Recall the definition of $\sigma_{a}(\fn)$ in Section \ref{nots}. Note that for $(\fn,\fm) = (1)$, we have 
\[ \sigma_{a}(\fn\fm) = \sigma_a(\fn)\sigma_a(\fm). \] Then the first error term in (\ref{01072}) is
\begin{eqnarray}\label{sumdr}
\nonumber & \ll & Q^{-1/3} \sum_{\fd \in \cD_\frr} \rN\fd^{-2/3} = Q^{-1/3} \prod_{\fp | \frr} (1+(\rN\fp^{2/3}-1)^{-1}) \ll Q^{-1/3} \prod_{\fp | \frr} (1+\rN\fp^{-1/2}) \\
 & \ll_A & (\log 2Q)^{-A}\sigma_{-1/2}(\frr)
\end{eqnarray}
for all but finitely many $\fp$. Similarly, for the second error term in (\ref{01072}), it is bounded by
\[ \ll_A \frac{1}{(\beta_0-1)^2|\zeta_k^{'}(\beta_0)|} (\log 2Q)^{-A}\sigma_{-1/2}(\frr). \]
For the third error term in (\ref{01072}), we separate the sum as
\begin{eqnarray*}
 \sum_{\substack{\fd \in \cD_\frr \\ \rN\fd \le Q}} \frac{1}{\rN\fd} (\log \frac{2Q}{\rN\fd})^{-A} &=& \sum_{\substack{\fd \in \cD_\frr \\ \rN\fd \le Q^{1/2}}} \frac{1}{\rN\fd} (\log \frac{2Q}{\rN\fd})^{-A} + \sum_{\substack{\fd \in \cD_\frr \\ Q^{1/2} < \rN\fd \le Q}} \frac{1}{\rN\fd} (\log \frac{2Q}{\rN\fd})^{-A} \\
 & \ll & (\log 2Q)^{-A}\sum_{\fd \in \cD_\frr} \rN\fd^{-1} + Q^{-1/6} \sum_{\fd \in \cD_\frr} \rN\fd^{-2/3} \\
 & \ll & (\log 2Q)^{-A}\sigma_{-1/2}(\frr).
 \end{eqnarray*}
 Note also that using the approach in (\ref{sumdr}), we have
\[ |C_{\frr,\beta_0}|= \frac{\sum_{\substack{\fd \in \cD_\frr}} \rN\fd^{-\beta_0}}{(\beta_0-1)^2|\zeta_k^{'}(\beta_0)|} \ll \frac{\sigma_{-\frac12}(\frr)}{(\beta_0-1)^2|\zeta_k^{'}(\beta_0)|}. \]
 This finishes the proof of Lemma \ref{10ing}.
\end{proof}

\begin{proof}[Proof of Lemma \ref{11ing}]
We first prove the following formula. For any ideal $\fn$, 
\[ \frac{1}{\kappa(\fn)} = \frac{1}{\rN\fn} \sum_{\fd | \fn} \frac{\mu_k(\fd)}{\kappa(\fd)}. \]
It holds because
\[ \frac{1}{\rN\fn} \sum_{\fd | \fn} \frac{\mu_k(\fd)}{\kappa(\fd)} = \frac{1}{\rN\fn} \sum_{\fd | \fn} \mu_k(\fd) \prod_{\fp | \fd} \frac{1}{\rN\fp+1} = \frac{1}{\rN\fn} \prod_{\fp | \fn} (1-\frac{1}{\rN\fp + 1}) = \frac{1}{\kappa(\fn)}. \]
Then we are able to compute the left-hand side as
\begin{eqnarray}\label{02011} 
\nonumber \sum_{\substack{\rN\fn \le Q \\ (\fn,\frr) = (1)}} \frac{\mu_k(\fn)}{\kappa(\fn)} \log(\frac{Q}{\rN\fn}) &=& \sum_{\substack{\rN\fd \le Q \\ (\fd,\frr) = (1)}} \frac{\mu_k(\fd)}{\kappa(\fd)} \sum_{\substack{\rN\fn \le Q \\ (\fn,\frr) = (1) \\ \fd | \fn}}  \frac{\mu_k(\fn)}{\rN\fn} \log(\frac{Q}{\rN\fn}) \\
&=& \sum_{\substack{\rN\fd \le Q \\ (\fd,\frr) = (1)}} \frac{\mu_k^2(\fd)}{\rN\fd \kappa(\fd)} \sum_{\substack{\rN\fm \le \frac{Q}{\rN\fd} \\ (\fm,\fd\frr) = (1)}}  \frac{\mu_k(\fm)}{\rN\fm} \log(\frac{Q}{\rN\fm \rN\fd}). 
\end{eqnarray}
By Lemma \ref{10ing}, (\ref{02011}) is equal to
\begin{eqnarray*}
&=& \frac{\rN\frr}{s_k \varphi(\frr)} \sum_{\substack{\rN\fd \le Q \\ (\fd,\frr) = (1)}} \frac{\mu_k^2(\fd)}{\kappa(\fd) \varphi(\fd)} +\sum_{\substack{\rN\fd \le Q \\ (\fd,\frr) = (1)}}C_{\fd\frr,\beta_0} \frac{\mu_k^2(\fd)}{\rN\fd \kappa(\fd)} (\frac{Q}{\rN\fd})^{\beta_0-1} \\
& & + O_A(C_{\beta_0}^{'}\sigma_{-\frac12}(\frr) \sum_{d \le Q} \rN\fd^{-2} (\log \frac{2Q}{\rN\fd})^{-A}) \\
&=& \frac{\rN\frr}{s_k \varphi(\frr)} \prod_{\fp \nmid \frr} (1-\rN\fp^{-2})^{-1} +\sum_{\substack{(\fd,\frr) = (1)}}C_{\fd\frr,\beta_0} \frac{\mu_k^2(\fd)}{\rN\fd \kappa(\fd)} (\frac{Q}{\rN\fd})^{\beta_0-1} \\
& & + O(\frac{\rN\frr}{\varphi(\frr)} \sum_{\rN\fd > Q} \rN\fd^{-2}) + O_A(C_{\beta_0}^{'}\sigma_{-\frac12}(\frr) \sum_{d \le Q} \rN\fd^{-2} (\log \frac{2Q}{\rN\fd})^{-A}) \\
&=& \frac{\kappa(\frr)}{\rN\frr} \frac{\zeta_k(2)}{s_k} +C_{\frr,\beta_0}^{'}Q^{\beta_0-1}+ O_A(C_{\beta_0}^{'}\sigma_{-\frac12}(\frr) (\log 2Q)^{-A})
\end{eqnarray*}
for a constant $C_{\frr,\beta_0}^{'}$ depending only on $\frr,\beta_0$.

Note also that
\begin{eqnarray*}
|C_{\frr,\beta_0}^{'}| &=& \left| \sum_{\substack{(\fd,\frr) = (1)}}C_{\fd\frr,\beta_0} \frac{\mu_k^2(\fd)}{\rN\fd \kappa(\fd)} (\frac{1}{\rN\fd})^{\beta_0-1} \right| \le \sum_{\substack{(\fd,\frr) = (1)}}|C_{\fd\frr,\beta_0}| \frac{1}{\rN\fd^2} (\frac{1}{\rN\fd})^{\beta_0-1} \\
& \le & \sum_{\substack{(\fd,\frr) = (1)}}|C_{\beta_0}^{'}\sigma_{-\frac12}(\fd\frr)| \frac{1}{\rN\fd^2} (\frac{1}{\rN\fd})^{\beta_0-1} = C_{\beta_0}^{'}\sigma_{-\frac12}(\frr) \sum_{\substack{(\fd,\frr) = (1)}}\sigma_{-\frac12}(\fd) \frac{1}{\rN\fd^2} (\frac{1}{\rN\fd})^{\beta_0-1} \\
& \ll & C_{\beta_0}^{'}\sigma_{-\frac12}(\frr) \sum_{\substack{(\fd,\frr) = (1)}}\rN\fd^{\frac12} \frac{1}{\rN\fd^2} (\frac{1}{\rN\fd})^{\beta_0-1}  \ll  C_{\beta_0}^{'}\sigma_{-\frac12}(\frr).
\end{eqnarray*}
This finishes the proof of Lemma \ref{11ing}.
\end{proof}

\begin{proof}[Proof of Lemma \ref{12ing}]
For any constant $c>1$, 
\begin{eqnarray*}
(\log c) \sum_{\substack{\rN\fn \le Q \\ (\fn,\frr) = (1)}} \frac{\mu_k(\fn)}{\kappa(\fn)} &=& \sum_{\substack{\rN\fn \le Qc \\ (\fn,\frr) = (1)}} \frac{\mu_k(\fn)}{\kappa(\fn)} \log (\frac{Qc}{\rN\fn}) - \sum_{\substack{\rN\fn \le Q \\ (\fn,\frr) = (1)}} \frac{\mu_k(\fn)}{\kappa(\fn)} \log (\frac{Q}{\rN\fn}) \\
& & - \sum_{\substack{Q < \rN\fn \le Qc \\ (\fn,\frr) = (1)}} \frac{\mu_k(\fn)}{\kappa(\fn)} \log (\frac{Qc}{\rN\fn}).
\end{eqnarray*} 
We take $c=1+(\log 2Q)^{-A} \asymp \exp((\log 2Q)^{-A})$. By Lemma \ref{11ing}, the first two sums on the right-hand side is 
\begin{eqnarray*}
& = & C_{\frr,\beta_0}^{'}Q^{\beta_0-1}((1+(\log 2Q)^{-A})^{\beta_0-1}-1)+O_A(C_{\beta_0}^{'}\sigma_{-\frac12}(\frr) (\log 2Q)^{-2A}) \\
&=& O(C_{\frr,\beta_0}^{'}Q^{\beta_0-1}|\exp((\beta_0-1)(\log 2Q)^{-A})-1|) + O_A(C_{\beta_0}^{'}\sigma_{-\frac12}(\frr) (\log 2Q)^{-2A}) \\
&=& O(C_{\frr,\beta_0}^{'}Q^{\beta_0-1}(1-\beta_0)(\log 2Q)^{-A}) + O_A(C_{\beta_0}^{'}\sigma_{-\frac12}(\frr) (\log 2Q)^{-2A}).
\end{eqnarray*}
The third sum is $\ll (\log c) \sum_{Q<\rN\fn \le Qc} \rN\fn^{-1} \ll (\log c)^2$. Therefore,
\begin{eqnarray*}
 \sum_{\substack{\rN\fn \le Q \\ (\fn,\frr) = (1)}} \frac{\mu_k(\fn)}{\kappa(\fn)} & \ll_A  & \frac{C_{\frr,\beta_0}^{'}Q^{\beta_0-1}(1-\beta_0)(\log 2Q)^{-A}}{\log c} +C_{\beta_0}^{'}\frac{\sigma_{-\frac12}(\frr) (\log 2Q)^{-2A}}{\log c} + \log c \\
 &\asymp& C_{\frr,\beta_0}^{'}Q^{\beta_0-1}(1-\beta_0) +C_{\beta_0}^{'}\sigma_{-\frac12}(\frr) (\log 2Q)^{-A}+ (\log 2Q)^{-A} \\
 & \asymp & C_{\frr,\beta_0}^{'}Q^{\beta_0-1}(1-\beta_0) +C_{\beta_0}^{'}\sigma_{-\frac12}(\frr) (\log 2Q)^{-A}.
 \end{eqnarray*}
\end{proof}

This completes the proofs of all lemmas in Section \ref{lems} and hence Theorems \ref{ig} and \ref{igtoug1}, and Corollary \ref{ug} are proved.




\subsection*{Acknowledgements}
The contents in this paper are part of the author's Ph.D. dissertation at Duke University. The author thanks his advisor Lillian Pierce for her helpful and consistent guidance during this work, and Jesse Thorner for suggesting this topic.

\bibliographystyle{alpha}

\bibliography{NoThBibliography}






\end{document}